\newtheorem{theorem}{Theorem}
\newtheorem{lemma}[theorem]{Lemma}
\newtheorem{proposition}[theorem]{Proposition}
\newtheorem{corollary}[theorem]{Corollary}
\DeclareMathOperator*{\R}{\mathbb{R}}
\DeclareMathOperator*{\E}{\mathbb{E}}
\DeclareMathOperator*{\N}{\mathbb{N}}
\DeclareMathOperator*{\Z}{\mathbb{Z}}
\DeclareMathOperator*{\Prob}{\mathbb{P}}
\newcommand{\bigO}{\mathcal{O}}
\newcommand{\dv}{\text{d}}
\newcommand{\FBS}{\textsf{FBS}}
\newcommand{\BR}{\textsf{BR}}
\newcommand{\IRT}{\textsf{IRT}}
\newcommand{\LT}{\textsf{LT}}
\newcommand{\revision}[1]{{\color{black}#1}}
\begin{document}

\title{\vspace{-40pt}On Linear Threshold Policies for\\[-12pt]Continuous-Time Dynamic Yield Management}
\author{
Dipayan Banerjee$^{1}$
\qquad \quad 
Alan Erera$^{2}$
\qquad \quad Alejandro Toriello$^{2}$
\vspace{-10pt}}
\affil{\small{$^1$Quinlan School of Business, Loyola University Chicago}\\
\small{$^2$H.\ Milton Stewart School of Industrial and Systems Engineering, Georgia Institute of Technology}}

\date{}

\maketitle
\vspace{-20pt}


\noindent \textbf{Abstract.} 
We study the finite-horizon continuous-time dynamic yield management problem with stationary arrival rates and two customer types. We consider a class of linear threshold policies proposed by \cite{Hodge2008}, in which each less-profitable customer is accepted if and only if the remaining inventory exceeds a threshold that linearly decreases over the horizon. We use a Markov chain representation to show that such policies achieve uniformly bounded regret. We then generalize this result to analogous policies for arbitrarily many customer types.

\section{Background}
\label{sec:intro}

A multichannel retailer sells a product through two different channels, denoted as \textit{offline} and \textit{online} channels for consistency, over a finite horizon of duration $T$. Customers arrive over the horizon and attempt to purchase one unit of the product. Offline customers arrive via a Poisson process with rate $\lambda_1 > 0$ and pay $p_1 > 0$ for each unit. Online customers arrive via a Poisson process with rate $\lambda_2 > 0$ and pay the same price. The retailer incurs an additional fixed cost to fulfill each online order, and online customers receive free delivery; this reflects the situation faced by an industry partner. Equivalently, we instead assume without loss of generality that online customers pay $p_2 \in (0,  p_1)$ for each unit, and the retailer does not incur the fixed per-customer cost.
Any inventory left over at the end of the selling horizon is discarded and provides no additional salvage value. For notational consistency with related work, $t$ denotes the time left until the end of the horizon; thus, $t = 0$ denotes the terminal time. The retailer has $n$ units of starting inventory available at $t = T$ with no replenishment opportunities during the horizon. 

The objective is to maximize the revenue collected over the horizon. 
To this end, the retailer must immediately decide whether to accept an arriving customer's order. It is clear that every offline customer should be accepted (thereby receiving $p_1$ in revenue) while positive inventory remains. Backorders are not permitted, so all customers are rejected upon inventory depletion. Thus, the retailer faces the problem of determining whether to accept or reject each arriving online customer (thereby receiving $p_2$ or $0$ in revenue, respectively). Mathematically, this is the \textit{continuous-time dynamic yield management problem} with two customer classes, stationary arrival rates, and a finite horizon. 

As shown independently by \cite{Feng2001, Liang1999, Zhao1999}, a particular threshold policy maximizes expected revenue. Specifically, an online order is accepted at time $t$ if (and only if) the inventory remaining exceeds $\theta(t)$, where the optimal threshold function $\theta(\cdot)$ is non-decreasing in $t$ and independent of the starting inventory $n$. All offline orders are accepted while time remains. A continuous-time dynamic programming method is generally used to compute the optimal threshold function $\theta(\cdot)$; see \cite{Liang1999, Alishah2017} for details. Figure \ref{fig:ex1} illustrates numerically computed threshold functions for two settings with $\lambda_1 = 8$, $\lambda_2 = 2$, $p_1 = 25$, and $T = 5$.

Exact computation of $\theta(\cdot)$ requires recursion, repeated numerical integration, and floating-point calculations involving near-zero values. As a result, compounding numerical issues prevent reliable exact computation of thresholds as $T$ grows. We are therefore interested in characterizing the behavior of $\theta(t)$ as $t \to \infty$. \cite{Hodge2008} conjectures that $\theta(t)$ behaves linearly (that is, the function's step widths converge to a finite positive constant) as $t \to \infty$. Despite empirical evidence in support of the conjecture, such as Figure \ref{fig:ex1} of this work and Figure 4 of \cite{Alishah2017}, it remains unproven to our knowledge.

\begin{figure}[!h]
    \centering
    \includegraphics[width=0.8\textwidth]{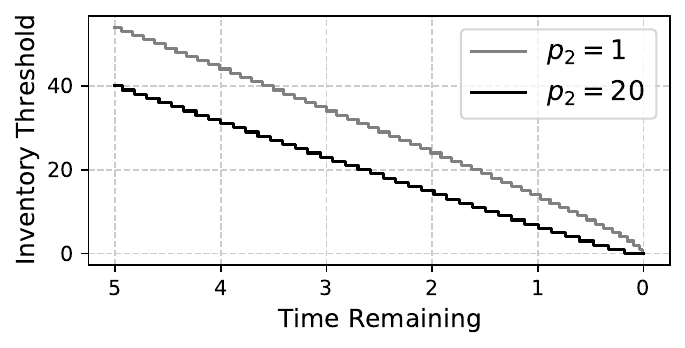}
    \vspace{-6pt}
    \caption{Example optimal threshold functions}
    \label{fig:ex1}
\end{figure}

In this work, we instead show that a range of simple linear inventory threshold policies are asymptotically optimal. Formally, for each $\beta \in (\lambda_1, \lambda_1 + \lambda_2)$, the linear threshold acceptance policy with slope $\beta$ (denoted $\beta$-\LT{}) accepts an incoming online customer at time $t$ if and only if the remaining inventory at that time is at least $\beta t$; all offline orders are accepted while time remains. To show that $\beta$-\LT{} policies are asymptotically optimal, we prove an even stronger result: for fixed $\lambda_1$ and $\lambda_2$, any appropriately chosen $\beta$-\LT{} policy achieves regret -- the expected absolute gap between a policy's revenue and that of the hindsight-optimal solution -- that is bounded above by a uniform constant that depends on $\beta$, $\lambda_1$, and $\lambda_2$ but not on $n$ and $T$. We then show that an extension of these policies also achieves uniformly bounded regret when more than two customer classes are present. Our policies and results are related to those of \cite{Arlotto2019} and \cite{Vera2021}. In particular, $\beta$-\LT{} policies are continuous-time generalizations of the policy proposed by \cite{Arlotto2019} for the multi-secretary problem. Although the approach used by \cite{Arlotto2019} could likely be modified to prove the results herein, our analysis (which directly uses Markov chain modeling and standard results from queueing theory) differs significantly from these prior works. 

\subsection{Related Work}

Our problem is a special case of the broader class of continuous-time, quantity-based network revenue management problems, commonly referred to simply as network revenue management (NRM). Specifically, our main problem can be viewed as a NRM problem with one resource (the product to be sold), unit demands, and two customer classes having stationary Poisson arrival rates. Heuristics for NRM typically rely on the solution to a deterministic linear programming (DLP) approximation of the stochastic problem. The DLP associated with our specific two-class problem is 
\begin{equation}
    \max_{z_1, z_2 \in [0, 1]} \lambda_1 T p_1 z_1 + \lambda_2 T p_2 z_2 \quad \text{s.t. } \lambda_1 T z_1 + \lambda_2 T z_2 \leq n,
    \label{baseDLP}
\end{equation}
where the variables $z_1$ and $z_2$ denote the proportion of offline and online orders accepted, respectively. One simple DLP-based heuristic accepts each incoming offline customer with probability $z_1$ and each incoming online customer with probability $z_2$ while inventory remains. This heuristic and many DLP-based heuristics for general NRM are known to achieve $\Theta(\sqrt{T})$ regret when $n$ and $T$ are scaled up proportionally.

\cite{Jasin2012} develop an 
NRM heuristic policy, \textit{Probabilistic Allocation Control} (\textsf{PAC}), that periodically re-solves the DLP  at equally spaced intervals. Although the authors show \textsf{PAC} to have $\bigO(1)$ regret if the DLP is non-degenerate, \cite{Bumpensanti2020} show that the \textsf{PAC} policy's regret is $\Theta(\sqrt{T})$ for certain degenerate DLPs. 
\cite{Bumpensanti2020} develop an improved NRM heuristic policy that achieves $\bigO(1)$ regret in the limit. Their approach, \textit{Infrequent Re-Solving with Thresholding}
(\IRT{}), requires $\bigO(\log \log T)$ re-optimizations of the underlying DLP at unequal intervals during a horizon of length $T$. 
For the special case of a single resource (i.e., product) considered in this work, we show that linear threshold policies also achieve $\bigO(1)$ regret without solving the DLP even once. 

\cite{Arlotto2019} construct a specific constant-regret \textit{Budget Ratio} (\BR{}) policy for a discrete-time multi-secretary problem. The continuous-time problem studied here can be viewed as a limiting case of their multi-secretary problem upon appropriate scaling. \cite{Vera2021} construct a specific constant-regret \textit{Fluid Bayes Selector} (\FBS{}) policy that uses dynamic thresholds for a class of dynamic optimization problems, of which NRM is a special case. In contrast to these works, our analysis explicitly assumes continuous-time arrivals, and our approach relies on characteristics of a particular discrete-time Markov chain (DTMC) model. Our approach shows that a range of $\beta$-\LT{} policies each achieve $\bigO(1)$ regret, suggesting that a range of policies similar to the authors' specifically constructed \BR{} and \FBS{} policies might also achieve constant regret. Additionally, as we discuss later, the asymptotic optimality of our $\beta$-\LT{} policies does not always require exact knowledge of the customer arrival rates. This is a continuous-time analog of the multi-secretary extension in which ``the ability distribution is...\ unknown to the decision maker'' noted by \cite{Arlotto2019} as a potential direction for future work. \revision{Finally, numerous other works study various extensions of the basic NRM problem. For example, \cite{Zhang2016} study a two-class problem with time-varying, correlated arrival rates and propose a heuristic that achieves regret no more than twice that of the optimal policy.}

\subsection{Definitions and Notation}

Let the arrival of offline customers over the interval $[T, 0]$ be denoted by the Poisson process $\{N_{1, T}(t), t \in [T, 0]\}$. Observe that, unlike a conventionally defined Poisson process, $N_{1, T}(t)$ is non-increasing in $t$ because $t$ decreases as time elapses. Similarly, let the arrival of online customers during $[T, 0]$ be denoted by the Poisson process $\{N_{2, T}(t), t \in [T, 0]\}$. For notational convenience, let $\mathcal{N}_T = \big( \{N_{1, T}(t)\}, \{N_{2, T}(t)\} \big)$ and $\lambda_{1, 2} = \lambda_1 + \lambda_2$.

Fundamentally, the retailer seeks to sell all $n$ units by the end of the horizon $t = 0$ while selling to as many offline customers as possible. That is, selling a unit to an online customer is preferable to leaving it unsold, but selling the unit to an offline customer is ideal. Therefore, with hindsight knowledge of $N_{1, T}(0)$ and $N_{2, T}(0)$, we can fully characterize the hindsight-optimal solution: sell $\min\{n,  N_{1, T}(0)\}$ units to offline customers, and $\min\big\{n - \min\{n,  N_{1, T}(0)\}, N_{2, T}(0) \big\}$ units to online customers. For given $n$ and $T$, the \textit{regret} $\rho_\text{P}(n, T)$ associated with a policy P is the expected difference between the revenue gained by the policy and the revenue gained by the hindsight-optimal solution. Regret must always be non-negative because no policy can outperform hindsight optimality. 


We use $f(x, y)$ to denote the probability of a Poisson random variable with mean $x$ taking the value $y$, and $f^+(x, y)$ for the probability of a Poisson random variable with mean $x$ taking a value of $y$ or greater. The function $h(w, x, y)$ denotes the probability density at $y$ of an Erlang random variable with shape $w$ and rate parameter $x$. We denote the positive integers by $\N$ and the non-negative integers by $\N_0$.

\section{Linear Threshold Policies}
\label{sec:two_class}


\cite{Hodge2008} proposed the heuristic $\beta$-\LT{} policy for two customer classes and empirically demonstrated its efficacy. We aim to show that, for any fixed $\beta \in (\lambda_1, \lambda_{1, 2})$, the $\beta$-\LT{} policy's regret $\rho_\beta(n, n / \alpha)$ is uniformly bounded above. 
This result is formalized in Theorem \ref{main}, the main result of this note. 

\begin{theorem}
    For each $\beta \in (\lambda_1, \lambda_{1, 2})$, there exists a finite positive constant $\hat{\rho}_\beta$ such that $\rho_\beta(n, n / \alpha) \leq \hat{\rho}_\beta$ for any $n \in \N$ and $\alpha > 0$.
    \label{main}
\end{theorem}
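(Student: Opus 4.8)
The plan is to bound the regret by comparing the $\beta$-\LT{} policy to the hindsight optimum one unit at a time, after splitting the regret into two separately controllable pieces. Let $A = N_{1,T}(0)$ and $B = N_{2,T}(0)$ be the realized offline and online arrival counts, let $I(t)$ denote the inventory the policy holds at time $t$, and let $a^\pi, b^\pi$ be the numbers of offline and online orders it fills. Since the hindsight optimum sells $\min\{n, A\}$ units to offline and $\min\{(n-A)^+, B\}$ to online customers, writing each side's revenue in the form $p_2\cdot(\text{units sold}) + (p_1 - p_2)\cdot(\text{offline units sold})$ gives
\begin{equation*}
\rho_\beta(n, n/\alpha) = p_2\, \E\!\left[\min\{n, A+B\} - (a^\pi + b^\pi)\right] + (p_1 - p_2)\, \E\!\left[\min\{n, A\} - a^\pi\right],
\end{equation*}
with both expectations nonnegative termwise. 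The first term is the value of under-selling: it equals $p_2$ times the leftover inventory when total demand exceeds $n$, and $p_2$ times the number of online customers rejected when it does not. The second term charges the premium $p_1 - p_2$ for misallocating inventory; since $\min\{n,A\} - a^\pi \le A - a^\pi$, it is at most $(p_1 - p_2)$ times the number of offline customers who arrive while the policy is stocked out. It therefore suffices to show that the expected leftover inventory, the expected number of rejected online customers, and the expected number of offline arrivals during a stockout are each $\bigO(1)$ uniformly in $n$ and $\alpha$.

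The central object is the gap $G(t) = I(t) - \beta t$ between inventory and threshold. Reading time as it elapses (so $t$ decreases from $T$ to $0$), $G$ rises at the deterministic rate $\beta$ between arrivals and drops by one at each accepted arrival; accepted arrivals occur at rate $\lambda_{1,2}$ while $G \ge 0$ (online accepted) and at rate $\lambda_1$ while $G < 0$ (online rejected). Because $\beta < \lambda_{1,2}$, the downward jumps dominate the drift when $G \ge 0$, and because $\beta > \lambda_1$, the drift dominates when $G < 0$; hence $G$ is mean-reverting about $0$. I would make this precise through the DTMC obtained by sampling at the epochs $t_k = k/\beta$ at which the threshold passes through the integer $k$, tracking $M_k = G(t_k)$. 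Memorylessness of the Poisson arrivals makes $(M_k)$ time-homogeneous, with one-step transitions governed by the number of accepted arrivals in an interval of length $1/\beta$. A geometric (Foster--Lyapunov) drift argument, or a direct comparison to the workload of a stable single-server queue, then yields a stationary law for $(M_k)$ with two-sided geometric tails whose rate constants depend only on $\beta, \lambda_1, \lambda_2$ and \emph{not} on $n$ or $T$. This furnishes uniform bounds of the form $\Prob(M_k \ge x) \le C e^{-cx}$ and $\Prob(M_k \le -x) \le C e^{-cx}$, and in particular $\E|M_k| = \bigO(1)$.

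Granting these tail bounds, each component follows. The leftover inventory is exactly $I(0^+) = G(0^+)$, since the threshold vanishes at $t=0$, so its expectation is $\bigO(1)$ by the upper-tail bound whenever the fluid trajectory actually depletes inventory, i.e.\ for $\alpha \le \lambda_{1,2}$; when $\alpha > \lambda_{1,2}$ the relevant event $\{A+B \ge n\}$ is exponentially unlikely, and $n\,\Prob(A+B \ge n) = \bigO(1)$ disposes of it. A rejected online customer requires $G < 0$, so the rejection count is at most $\lambda_2$ times the occupation time of $\{G<0\}$; this is $\bigO(1)$ when inventory is abundant (lower-tail bound), while in the inventory-scarce regime the rejections only matter on the exponentially unlikely event $\{A+B<n\}$, again handled by a crude large-deviation bound. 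Finally, a stockout at remaining time $\tau$ forces $G = -\beta\tau$, so by the lower-tail bound it occurs with probability at most $C e^{-c\beta\tau}$; since depleted inventory cannot recover, the expected offline arrivals lost to stockout are $\lambda_1\,\E[\tau\,\mathbf{1}\{\text{stockout}\}] \le \lambda_1 \int_0^\infty C e^{-c\beta\tau}\,\dtau = \bigO(1)$. Combining the three contributions with the prices produces the horizon-independent constant $\hat\rho_\beta$.

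The main obstacle is establishing the uniform geometric tails of the gap chain \emph{together with} the terminal (boundary-layer) control, and these are coupled. The chain $(M_k)$ is not a textbook queue: its jump intensity switches at $0$, so its drift is regime-dependent, and the geometric-tail conclusion must be proven by a Lyapunov or coupling argument rather than quoted. More delicate is that regret accrues precisely at the terminal boundary, where the threshold shrinks to zero, and the bound on $\E[G(0^+)]$ must hold for \emph{every} starting condition $G(T) = n(1 - \beta/\alpha)$ and every horizon length. Showing that the terminal gap inherits an $\bigO(1)$ bound regardless of the initial transient---in particular that a large initial gap (large $\alpha$) either relaxes into the mean-reverting region before $t=0$ or else lands in a demand regime where leftover is not charged against hindsight---is the crux, and it is what makes the constant $\hat\rho_\beta$ independent of $n$ and $T$.
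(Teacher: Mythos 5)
Your skeleton is the paper's: the object you call $M_k = G(t_k)$ sampled at the epochs where the threshold crosses integers is exactly the paper's DTMC $\left(X^T_t\right)$ (after rescaling $\beta = 1$), the stability mechanism is the same rate sandwich $\lambda_1 < \beta < \lambda_{1,2}$, and the crossing/no-crossing dichotomy for $\alpha \neq \beta$ is the paper's Lemmas \ref{aboveHO}--\ref{belowHO} and Propositions \ref{above_opt}, \ref{below_opt}. Where you propose a Foster--Lyapunov drift argument for the gap chain, the paper instead decomposes the chain into its restrictions to $\N_0$ and to $\Z\setminus\N$ and identifies these with the arrival-time chain of a stable $D/M/1$ queue and the departure-time chain of a (modified) stable $M/D/1$ queue, importing finiteness of the stationary first moment from textbook results; your Lyapunov route is plausible and would in fact deliver stronger (exponential, uniform-in-$k$ from a bounded start) control, but you assert rather than establish it. Your regret decomposition into $p_2\cdot(\text{under-selling}) + (p_1-p_2)\cdot(\text{misallocation})$ is finer than the paper's cruder bound $\rho_\beta \leq p_1\Delta_\beta$, where $\Delta_\beta$ is the expected absolute terminal inventory position with offline rejections after stockout counted as negative inventory; the paper's accounting folds your third quantity (offline arrivals during stockout) into $|X^T_0|$ automatically, so no separate integration over stockout times is needed.

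The genuine gap is the one you yourself flag as ``the crux'' and then do not close: converting stationary tail bounds into a bound on the law of the gap \emph{at the terminal time} that is uniform over horizons $T$ and over initial gaps $G(T) = n(1-\beta/\alpha)$, which can be of order $n$. The paper closes this with two specific devices you are missing. First, for the on-the-line start, Proposition \ref{MC_bound} shows $\E\big[|X^T_0| \,\big\vert\, X^T_T = 0\big] \leq \pi_0^{-1}\sum_{i\in\Z}|i|\pi_i$ by writing the stationary expectation as a mixture over initial states and dropping all terms but $i=0$ --- so no claim of stationarity at $t=0$ is ever needed. Second, for $\alpha \neq \beta$, the key observation is that at the first instant the sample path jumps across the $\beta$-line its position is $\lfloor \beta t\rfloor$, i.e.\ within one unit of the line, so the analysis restarts from a \emph{bounded} state (Lemmas \ref{unif_k_bound}, \ref{unif_floor_bound}); if no crossing ever occurs the policy is exactly hindsight-optimal, contributing zero regret. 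Your phrase ``relaxes into the mean-reverting region'' gestures at this but does not supply the bounded-overshoot-at-first-crossing fact that makes it work. Separately, one written inference fails as stated: from $\Prob(M_k \leq -x) \leq Ce^{-cx}$ you conclude the occupation time of $\{G<0\}$ is $\bigO(1)$, but that bound only gives $\Prob(M_k < 0) \leq C$ per step, hence $\bigO(T)$ occupation time; the correct rescue is that online rejections enter the regret only on $\{A+B<n\}$, which is exponentially rare whenever the path spends nonvanishing time below the line, and that is a different argument from the one you wrote.
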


\noindent Theorem \ref{main} is implied by Corollary \ref{on_opt}, Proposition \ref{above_opt}, and Proposition \ref{below_opt}; these results correspond to the cases $\alpha = \beta$, $\alpha > \beta$, and $\alpha < \beta$, respectively. It should be noted that $\rho_\beta(\cdot, \cdot)$ and $\hat{\rho}_\beta$, as well as other functions and constants introduced later, depend on $\lambda_1$ and $\lambda_2$. However, we assume that these rate parameters are fixed beforehand unless stated otherwise, so $\lambda_1$ and $\lambda_2$ are omitted from subscripts and function arguments for brevity.

\subsection{Preliminaries}

We consider an identical system in which the inventory position can be negative. Specifically, once the inventory depletes, we continue to track the number of offline orders rejected due to the stockout. The \textit{final inventory position} is then the negative of the number of offline orders rejected after inventory depletion (if a stockout occurred) or the actual final positive inventory position otherwise. Let $\Delta_\beta(s, t)$ denote the absolute expected final inventory position under the $\beta$-\LT{} policy with $s$ inventory remaining and $t$ time remaining. 

For a given policy and realization of $\mathcal{N}_T$, the absolute deviation from the hindsight-optimal objective value is determined by two factors:
(i) the number of rejected customers of each type that should have been accepted while positive inventory remained, and
(ii) the number of accepted customers of each type that should have been rejected.
For a policy in which no offline customers are rejected until inventory depletion (e.g., a $\beta$-\LT{} policy) a final inventory position of $x > 0$ implies that achieving the hindsight-optimal objective value would have required accepting at most $x$ additional online customers. Similarly, a final position of $x < 0$ indicates that achieving the hindsight-optimal objective value would have required rejecting at most $y$ online customers and accepting at most $y$ offline customers instead, where $y \leq |x|$. A final position of zero indicates that hindsight optimality was achieved.
Therefore, under any $\beta$-\LT{} policy, $\rho_\beta(n, T) \leq p_1 \Delta_\beta(n, T)$ for any $n$ and $T$.

For a given $\beta$-\LT{} policy, starting time $T$, initial inventory $n$, and realization of $\mathcal{N}_T$, we refer to the evolution of the inventory position over inventory-time space as the $\beta$\textit{-induced sample path} or simply as the \textit{sample path} as in \cite{Hodge2008}. Formally, the position of the sample path at time $t$ can be represented by the function $R(t, \beta, n, \mathcal{N}_T)$. Figure \ref{fig:sample_path} illustrates an example sample path and $\beta$-line.
By our bound, we know that hindsight optimality is achieved if the sample path ends at zero inventory. Lemmas \ref{aboveHO} and \ref{belowHO} state that hindsight optimality is also achieved if the sample path never crosses the $\beta$-line during the horizon.

\begin{figure}[!h]
    \centering
    \includegraphics[width=\linewidth]{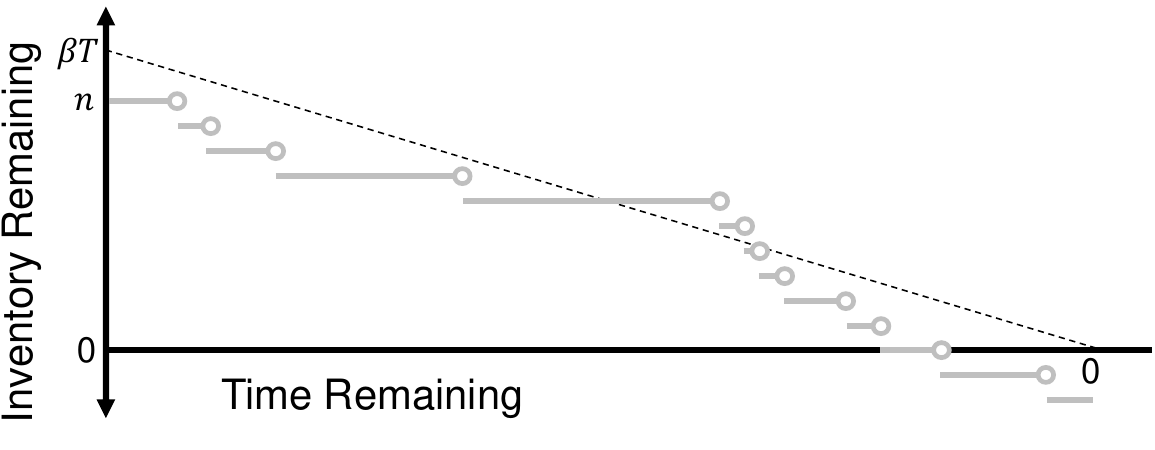}
    \caption{Example sample path (dark grey) relative to $\beta$-line (dashed black)}
    \label{fig:sample_path}
\end{figure}

\begin{lemma}
    If $n - \big(N_{1, T}(t) + N_{2, T}(t) \big) \geq \beta t$ for all $t \in [0, T]$, the $\beta$-\LT{} policy achieves hindsight optimality.\label{aboveHO}
\end{lemma}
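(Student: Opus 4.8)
The plan is to show that, under the stated hypothesis, the $\beta$-\LT{} policy accepts \emph{every} arriving customer of either type throughout the horizon, so that its inventory trajectory coincides exactly with the ``accept-everything'' trajectory $t \mapsto n - \big(N_{1, T}(t) + N_{2, T}(t)\big)$; I would then verify that accepting everyone is itself hindsight-optimal under this hypothesis.

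First I would order the (almost surely distinct) arrival epochs of the superposed Poisson processes and argue by induction on arrivals that the policy never rejects. Assuming every arrival prior to a given one has been accepted, the remaining inventory just before processing an arrival at time $t$ equals $n$ minus the number of strictly earlier arrivals; since the arrival under consideration is itself counted in $N_{1, T}(t) + N_{2, T}(t)$, this pre-arrival inventory equals $n - \big(N_{1, T}(t) + N_{2, T}(t)\big) + 1 \geq \beta t + 1 > \beta t \geq 0$ by the hypothesis. Hence the inventory is strictly positive (so an offline customer would be accepted) and at least $\beta t$ (so an online customer is accepted as well). The induction therefore shows that no customer is ever rejected, and the policy's path is exactly the accept-everything path.

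Next I would evaluate the hypothesis at the terminal time $t = 0$, where it reads $N_{1, T}(0) + N_{2, T}(0) \leq n$: total demand over the horizon does not exceed the starting inventory. Comparing with the hindsight-optimal prescription recalled earlier -- sell $\min\{n, N_{1, T}(0)\}$ units offline and $\min\{n - \min\{n, N_{1, T}(0)\}, N_{2, T}(0)\}$ online -- both minima are attained by their second arguments, so hindsight optimality also serves every customer and collects revenue $p_1 N_{1, T}(0) + p_2 N_{2, T}(0)$. Since the $\beta$-\LT{} policy serves exactly the same set of customers, the two revenues coincide and hindsight optimality is achieved.

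The only delicate point is the bookkeeping in the induction: one must be careful that the quantity $N_{1, T}(t) + N_{2, T}(t)$ at an arrival epoch already includes the arrival being processed, which is precisely what converts the weak hypothesis $\geq \beta t$ into the strict pre-arrival bound $> \beta t$ needed both to justify acceptance of an online customer and to guarantee positive inventory for an offline one. Because the superposed arrival process has distinct jump times almost surely, no ties between the two customer types need to be handled.
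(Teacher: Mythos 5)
Your proof is correct and takes essentially the same route as the paper, which simply asserts in one line that under the hypothesis both the $\beta$-\LT{} policy and the hindsight-optimal solution accept all $N_{1,T}(0)$ offline and $N_{2,T}(0)$ online customers; your induction over arrival epochs and the terminal-time check at $t=0$ are a careful expansion of exactly that claim.
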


\begin{proof}
    If $n - \big(N_{1, T}(t) + N_{2, T}(t) \big) \geq \beta t$ for all $t \in [0, T]$, the $\beta$-\LT{} policy and the hindsight-optimal policy both accept $N_{1, T}(0)$ offline customers and $N_{2, T}(0)$ online customers.
\end{proof}

\begin{lemma}
    If $n - N_{1, T}(t) < \beta t$ for all $t \in (0, T]$, the $\beta$-\LT{} policy achieves hindsight optimality.
    \label{belowHO}
\end{lemma}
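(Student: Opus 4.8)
The plan is to show that, under this hypothesis, the $\beta$-\LT{} policy rejects every online customer and sells all $n$ units to offline customers, and that this coincides exactly with the hindsight-optimal solution for the same realization of $\mathcal{N}_T$. The hypothesis keeps the ``offline-only'' inventory path $n - N_{1, T}(t)$ strictly below the $\beta$-line on all of $(0, T]$, and the two facts I want to extract from it are: (i) the policy never accepts an online order, and (ii) offline demand alone is enough to deplete the inventory.

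For (i), I would argue as follows. Inventory is non-increasing in elapsed time (there is no replenishment), so if the policy holds positive inventory $I(t)$ at time $t$, it held positive inventory at every earlier instant and therefore accepted every offline arrival so far; hence $I(t) \le n - N_{1, T}(t)$, since the units already sold number at least the $N_{1, T}(t)$ offline arrivals. Now consider an online customer arriving at some $t \in (0, T]$. If the policy holds zero inventory the order is rejected trivially. Otherwise $I(t) \le n - N_{1, T}(t)$, which by hypothesis is strictly less than $\beta t$ (there is almost surely no simultaneous offline arrival, so the count just before the online arrival is still $N_{1, T}(t)$), so the acceptance test $I \ge \beta t$ fails. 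In either case the online customer is rejected, and since $t$ was arbitrary, no online customer is ever accepted.

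For (ii), observe that once all online orders are rejected, the inventory equals $n - N_{1, T}(t)$ while positive, so the policy sells $\min\{n, N_{1, T}(0)\}$ units to offline customers and none to online customers. The hypothesis forces $n \le N_{1, T}(0)$: for all sufficiently small $t > 0$ every offline arrival has already occurred, so $N_{1, T}(t) = N_{1, T}(0)$ and the hypothesis reads $n - N_{1, T}(0) < \beta t$; letting $t \downarrow 0$ yields $n - N_{1, T}(0) \le 0$ (arrivals at the terminal instant $t = 0$ occur with probability zero and may be ignored). Thus the policy sells all $n$ units to offline customers, for revenue $n p_1$. Comparing with hindsight: since $n \le N_{1, T}(0)$, the hindsight-optimal solution sells $\min\{n, N_{1, T}(0)\} = n$ units to offline customers and $\min\{n - n, N_{2, T}(0)\} = 0$ to online customers, also for revenue $n p_1$. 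The two revenues agree, so the $\beta$-\LT{} policy achieves hindsight optimality.

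I expect the main obstacle to be step (ii), specifically recognizing that the hypothesis — a statement only about the offline-only path near $t = 0$ — is exactly what rules out the otherwise-problematic case $n > N_{1, T}(0)$ with $N_{2, T}(0) > 0$, in which the policy would leave inventory that hindsight optimality would have sold to the online customers it rejected. Establishing $n \le N_{1, T}(0)$ via the limit as $t \downarrow 0$ is the crux; the no-online-acceptance claim in step (i) then follows cleanly from the monotonicity of inventory and the bound $I(t) \le n - N_{1, T}(t)$.
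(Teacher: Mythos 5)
Your proposal is correct and follows the same route as the paper, which simply asserts in one line that under this hypothesis both the $\beta$-\LT{} policy and the hindsight-optimal policy accept $n$ offline customers and no online customers; your steps (i) and (ii) are exactly the details behind that assertion (the inventory is dominated by the offline-only path, hence stays below the $\beta$-line so no online order passes the test, and the hypothesis near $t = 0$ forces $n \leq N_{1,T}(0)$ so offline demand alone depletes the stock). No gaps.
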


\begin{proof}
    If $n - N_{1, T}(t) < \beta t$ for all $t \in (0, T]$, the $\beta$-\LT{} policy and the hindsight-optimal policy both accept $n$ offline customers and no online customers.
\end{proof}

\subsection{Starting on the $\beta$-Line}

Suppose first that the initial inventory lies on the $\beta$-line (i.e., $\alpha = \beta$); it will be evident later that this case is fundamental to analyzing the $\alpha > \beta$ and $\alpha < \beta$ cases. As in \cite{Hodge2008}, we define an \textit{excursion} to mean a segment of the sample path (in inventory-time space) that begins and ends on the $\beta$-line and does not intersect the $\beta$-line at any point in between. An excursion is almost surely comprised of two consecutive phases. In the first phase, the sample path begins on the $\beta$-line and stays strictly above the $\beta$-line. The second phase begins when an arrival causes the sample path to jump strictly below the $\beta$-line; the sample path then stays strictly below the $\beta$-line until it intersects the $\beta$-line from below, completing the excursion. 
If $\alpha = \beta$, the sample path's behavior during the horizon can be viewed as a series of independent, complete excursions, likely followed by a partial excursion truncated by the end of the horizon (unless, by chance, an excursion is completed exactly at $t = 0$). 



The original analysis of \cite{Hodge2008} focused primarily on deriving various statistics about complete excursions, such as the expected length of a complete excursion and the expected proportion of time spent above the $\beta$-line during a complete excursion. Because we seek to leverage the $\rho_\beta(n, T) \leq p_1 \Delta_\beta(n, T)$ bound, we are instead concerned with characterizing the expected inventory position upon truncation of the final partial excursion. To this end, we construct a DTMC model of the sample path by observing the system only at integer time points. Although our approach and intent are different from those of the original work, Hodge's observation that sample paths exhibit queue-like behavior is useful in analyzing the DTMC model.

\subsubsection{Inventory Position as a Discrete-Time Markov Chain}
\label{sec:MC_model}

Assume without loss of generality that $\alpha = \beta = 1$ and that all other parameters have been scaled correspondingly. For each $T \in \N_0$, we define the homogeneous DTMC $\left( X^T_t \right) \triangleq \left( X^T_T, X^T_{T - 1}, X^T_{T - 2}, \ldots \right)$ on $\Z$ as the sample path's position at time $t$ relative to the $\beta$-line, $R(t, \beta, T, \mathcal{N}_T) - \beta t$ = $R(t, 1, T, \mathcal{N}_T) - t$. As usual, inventory is allowed to be negative, and time counts backwards from $T$. Thus, $X^T_T$ is the chain's initial state, and $X^T_0$ is the inventory position at the end of the horizon. For mathematical purposes, we allow the DTMCs to evolve past $t = 0$ in the same manner so that $X^T_{-1}, X^T_{-2}, X^T_{-3}, \ldots$ are also well-defined.

For any chain $\left( X^T_t \right)$, the one-step transition probability from $i \in \Z$ to $j \in \Z$, denoted $q_{i, j}$, is the probability that the sample path is $j$ units above the $\beta$-line at time $t - 1$ given that the sample path is $i$ units above the $\beta$-line at time $t$. For $i \geq 1$ and $j \in \{1, 2, \ldots, i + 1\}$, we have $q_{i, j} = f(\lambda_{1, 2}, i - j + 1)$. For $i \leq -1$ and $j \leq i + 1$, we have \revision{$q_{i, j} = f(\lambda_1, i - j + 1)$}. For $i \geq 0$ and $j \leq 0$,
\begin{equation}
    q_{i, j} = \int_0^1 h\big(i + 1, \lambda_{1, 2}, u\big) f\big( (1 - u) \lambda_1, -j\big) \dv u.
    \label{qij_above_to_below}
\end{equation}
\revision{A derivation of this expression appears in Appendix \ref{proof:qij}.}
All other transition probabilities are zero. It is clear from the non-zero transition probabilities that each chain $\left( X^T_t \right)$ is irreducible and aperiodic. 

For all $i \in \Z$, let $\E[S_i]$ denote the expected number of steps for the DTMC to return to state $i$ after departing state $i$. \cite{Hodge2008} derives an expression for the expected duration of an excursion; in particular, the expression is always finite when $\lambda_1 < \beta < \lambda_{1, 2}$. By definition, this expected duration is simply $\E[S_0]$, and it therefore follows that $\E[S_0] < \infty$. The chain $\left( X^T_t \right)$ is therefore positive recurrent and ergodic, implying the existence of a unique stationary limiting distribution. Let $\bm{\pi} = (\pi_i)_{i \in \Z}$ denote the limiting distribution of $\left( X^T_t \right)$, where $\pi_i = 1/S_i > 0$ for all $i \in \Z$. Observe that $\bm{\pi}$ does not depend on $T$, because all of the chains $\big\{ \left( X^0_t \right), \left( X^1_t \right), \left( X^2_t \right), \ldots \big\}$ are time-homogeneous and have identical transition probabilities.

\revision{
\subsubsection{Restricted Chains and Stable Queues}

We aim to show that the sample path remains close to the $\beta$-line even after a very long period of time has elapsed. That is, we aim to show that the DTMC's expected absolute value remains finite at its limiting distribution. To do this, we analyze the original DTMC by decomposing it into two \textit{restricted chains}: one that tracks the value of the original DTMC \textit{only} when the sample path is at or above the $\beta$-line, and another that tracks the value of the original DTMC \textit{only} when the sample path is at or below the $\beta$-line.
}

\begin{figure}[!b]
    \centering
    \includegraphics[width=\linewidth]{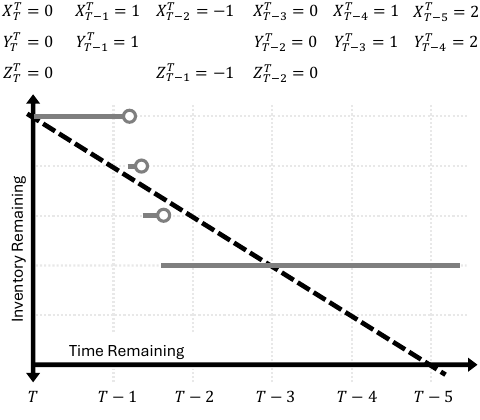}
    \caption{Example sample path (dark grey) relative to $\beta$-line (dashed black) with corresponding values of $\left( X^T_t \right)$, $\left(Y^T_\sigma\right)$, and $\left(Z^T_\tau\right)$}
    \label{fig:dtmc_illustration}
\end{figure}

For each $T \in \N_0$, let the DTMC $\left(Y^T_\sigma\right)$ denote the restriction of $\left( X^T_t \right)$ to $\N_0$. That is, for each $\sigma = T, T - 1, \ldots$, the value of $Y^T_\sigma$ is the value of $\left( X^T_t \right)$ at the $(T - \sigma + 1)$-th occurrence of $X^T_t \in \N_0$. 
\revision{Like $\left( X^T_t \right)$, the subscripts of $\left(Y^T_\sigma\right)$ count backwards from $T$. However, $\left(Y^T_\sigma\right)$ is only defined when $\left( X^T_t \right)$ is on or above the $\beta$-line. For example, in Figure \ref{fig:dtmc_illustration}, $\left(Y^T_{T - 4} \right) = 2$ as the 5th occurrence of the DTMC being on or above the $\beta$-line.}

Let $q^+_{i, j}$ denote the transition probabilities for this restricted chain, and let $\bm{\pi}^+$ denote the unique stationary limiting distribution of the restricted chain. For all $i \geq 0$ and $j \geq 1$, $q^+_{i,j} = q_{i,j}$. For all $i \geq 0$, we have $q^+_{i, 0} = \sum_{j = 0}^{-\infty} q_{i, j} = f^+(\lambda_{1, 2}, i + 1) = \revision{1 - \sum_{j = 0}^{\infty} q_{i, j}}$. \revision{In the context of the original DTMC, this is equal to the probability that the sample path is $i$ units above the $\beta$-line at time $t$ but crosses below the $\beta$-line before time $t - 1$. Then, the next time that the sample path is on or above the $\beta$-line must correspond to the next time that the original DTMC is in state 0 (hence the transition from $i$ to $0$ in the restricted chain).}

These $q^+_{i,j}$ are exactly the transition probabilities of the ergodic arrival-time Markov chain associated with a stable $D/M/1$ queue having arrival rate $\beta = 1$ and service rate $\lambda_{1, 2}$ (that is, the DTMC that tracks the number of jobs in the system just before the arrival of each new job). This queue's limiting arrival-time length distribution is known to have finite expected value (see e.g., \cite{Gross1998}, pp.\ 248--252). Therefore, $\sum_{i = 1}^\infty |i| \pi^+_i < \infty$.

For each $T \in \N_0$, let the DTMC $\left(Z^T_\tau\right)$ denote the restriction of $\left( X^T_t \right)$ to $\Z \setminus \N$. 
\revision{That is, for each $\tau = T, T - 1, \ldots$, the value of $Z^T_\tau$ is the value of $\left( X^T_t \right)$ at the $(T - \tau + 1)$-th occurrence of $X^T_t \in \Z \setminus \N$. 
Like $\left( X^T_t \right)$, the subscripts of $\left(Z^T_\tau\right)$ count backwards from $T$. However, $\left(Z^T_\tau\right)$ is only defined when $\left( X^T_t \right)$ is on or below the $\beta$-line. For example, in Figure \ref{fig:dtmc_illustration}, $\left(Z^T_{T - 1} \right) = -1$ as the 2nd occurrence of the DTMC being on or below the $\beta$-line.}

Let $q^-_{i, j}$ denote the transition probabilities for this restricted chain, and let $\bm{\pi}^-$ denote the unique stationary limiting distribution of the restricted chain. For all integers $i \leq -1$ and $j \leq 0$, $q^-_{i, j} = q_{i , j}$. Let the random variable $A$ denote the duration of time that an arbitrary excursion spends above the $\beta$-line, and let $g : [0, 1] \to \R^+$ denote the probability density function of $\lceil A \rceil - A$. Then, for all integers $j \leq 0$, we have 
\begin{equation}
    q^-_{0, j} = \int_0^1 g(u) f( u \lambda_1, j ) \dv u \ .
\end{equation}

For a stable $M/D/1$ queue, the associated ergodic departure-time DTMC tracks the number of jobs in the system just after the departure of each job. Let $\mathcal{Q}$ denote an $M/D/1$ queue having job arrival rate $\lambda_1$ and service rate $\beta > \lambda_1$. Let $\bm{\psi}$ denote the stationary distribution associated with the departure-time DTMC of $\mathcal{Q}$; it is known that $\sum_{i = 1}^\infty |i| \psi_i < \infty$ (see e.g., \cite{Serfozo2009}, pp.\ 316--318). Let $\mathcal{Q}'$ denote an $M/D/1$ queue having job arrival rate $\lambda_1$ and service rate $\beta$ with the additional feature that the first arriving job after each idle period is a partial job whose size is distributed according to $g(\cdot)$. Let $\bm{\omega}$ denote the stationary distribution associated with the departure-time DTMC of $\mathcal{Q}'$. For any particular sequence of job arrival times and any $k \in \N$, the number of jobs in the system upon the $k$-th job departure in $\mathcal{Q}'$ is no more than the number of jobs in the system upon the $k$-th job departure in $\mathcal{Q}$. Therefore, $\omega_i \leq \psi_i$ for all $i \in \N$, and thus $\sum_{i = 1}^\infty i \omega_i \leq \sum_{i = 1}^\infty i \psi_i < \infty$.


Observe that the departure-time DTMC of $\mathcal{Q}'$ has identical transition probabilities to the chains $\left(|Z^T_\tau|\right)$. It follows that $\omega_i = \pi^-_{-i}$ for all $i \in \N_0$, implying that $\sum_{i = -1}^{-\infty} |i| \pi^-_i < \infty$. \revision{For completeness, explicit derivations and comparisons of the queues' associated transition matrices are provided in Appendix \ref{queue_appendix}.}

Because $\left(Y^T_\sigma\right)$ and $\left(Z^T_\tau\right)$ are both restrictions of the corresponding $\left(X^T_t\right)$, the inequalities $\sum_{i = 1}^\infty |i| \pi^+_i < \infty$ and $\sum_{i = -1}^{-\infty} |i| \pi^-_i < \infty$ together imply $\sum_{i \in \Z} |i| \pi_i < \infty$. This allows us to prove that the expected absolute inventory position at $t = 0$ is bounded, independent of $T$, given that the initial inventory is on the $\beta$-line (i.e., the initial state is $X^T_T = 0$).

\begin{proposition}
    $\sup_{T \in \N} \big\{ \E\left[|X^T_0| \,\middle\vert\, X^T_T = 0 \right] \big\} \leq \frac{1}{\pi_0}\sum_{i \in \Z} |i| \pi_i < \infty$.\label{MC_bound}
\end{proposition}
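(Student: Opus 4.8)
The plan is to realize the conditioned chain as an ordinary forward Markov chain and then bound its absolute value at the fixed time $T$ by the total absolute value accumulated over a full regenerative cycle. Conditioned on $X^T_T = 0$, the sequence $X^T_T, X^T_{T-1}, \ldots$ is, by time-homogeneity, a Markov chain started at $0$ with transition probabilities $q_{i,j}$; I would denote a generic such chain by $(W_k)_{k \geq 0}$ with $W_0 = 0$, so that $X^T_0$ has the law of $W_T$ under the conditioning. Writing $S_0$ for the first return time to $0$, the regenerative (cycle) formula for the ergodic chain gives $\pi_i = \pi_0\,\E\!\left[\sum_{k=0}^{S_0-1} \mathbf{1}\{W_k = i\} \,\middle|\, W_0 = 0\right]$, and summing $|i|$ against this identity yields
\[
\frac{1}{\pi_0}\sum_{i \in \Z}|i|\,\pi_i \;=\; \E\!\left[\sum_{k=0}^{S_0-1}|W_k| \,\middle|\, W_0 = 0\right].
\]
The right-hand side is finite because $\sum_{i \in \Z}|i|\,\pi_i < \infty$ and $\pi_0 > 0$ were already established. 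It therefore suffices to show that $\E[\,|W_T| \mid W_0 = 0\,]$ is at most this cycle sum for every $T$.

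To compare the single term $|W_T|$ with the cycle sum, I would decompose on the last visit to $0$ at or before time $T$. Setting $u_k = \Prob(W_k = 0 \mid W_0 = 0)$ and $v_k = \E[\,|W_k|\,\mathbf{1}\{S_0 > k\} \mid W_0 = 0\,]$, conditioning on the last index $\ell \leq T$ with $W_\ell = 0$ (which exists since $W_0 = 0$) and applying the strong Markov property at that random time gives the convolution
\[
\E[\,|W_T| \mid W_0 = 0\,] \;=\; \sum_{k=0}^{T} u_{\,T-k}\, v_k .
\]
Indeed, the event that the last visit to $0$ occurs at $\ell = T-k$ has probability $u_{T-k}$ times the probability that the fresh chain started there avoids $0$ for the remaining $k$ steps, and on this event $|W_T|$ equals $|W_k|$ of the fresh chain; the $k=0$ term vanishes since $v_0 = |W_0| = 0$. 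Bounding each return probability by $u_{\,T-k} \leq 1$ and extending the sum to infinity then yields
\[
\E[\,|W_T| \mid W_0 = 0\,] \;\leq\; \sum_{k=0}^{\infty} v_k \;=\; \E\!\left[\sum_{k=0}^{S_0-1}|W_k| \,\middle|\, W_0 = 0\right] \;=\; \frac{1}{\pi_0}\sum_{i \in \Z}|i|\,\pi_i,
\]
where the middle equality interchanges sum and expectation by nonnegativity of all terms. Taking the supremum over $T \in \N$ gives the claim.

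The hard part will be the careful justification of the last-exit decomposition: one must verify that conditioning on the last hitting time of $0$ before $T$ and invoking the strong Markov property produces exactly the convolution of $(u_k)$ with $(v_k)$, with the indicator $\mathbf{1}\{S_0 > k\}$ correctly encoding ``no return to $0$'' on the post-$\ell$ segment and the pre-$\ell$ factor collapsing to the bare return probability $u_{T-k}$. Once this identity is in hand, the proof is immediate from the uniform bound $u_k \leq 1$ together with the finiteness of the cycle sum. Everything else---the realization of the conditioned chain as $(W_k)$ and the cycle formula identifying the constant $\frac{1}{\pi_0}\sum_{i}|i|\pi_i$---is standard ergodic-theory bookkeeping that relies only on the positive recurrence and the summability $\sum_{i \in \Z}|i|\,\pi_i < \infty$ established above.
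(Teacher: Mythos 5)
Your proof is correct, but it takes a genuinely different route from the paper's. The paper's argument is a short stationarity trick: since $\bm{\pi}$ is stationary, $\E\left[|X^0_t| \,\middle\vert\, X^0_0 \sim \bm{\pi}\right] = \sum_{i \in \Z}|i|\pi_i$ for every $t$, and decomposing this expectation over the initial state and discarding the non-negative terms with $i \neq 0$ gives $\pi_0 \E\left[|X^0_t| \,\middle\vert\, X^0_0 = 0\right] \leq \sum_{i \in \Z}|i|\pi_i$ directly; no regeneration structure is needed. You instead identify the constant $\frac{1}{\pi_0}\sum_i |i|\pi_i$ via the Kac/cycle formula as the expected accumulated absolute value over one excursion from $0$, and then dominate $\E[|W_T|]$ by that cycle sum through a last-exit decomposition at the final visit to $0$ before time $T$, using $u_{T-k}\leq 1$. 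Both arguments yield the identical bound and rest on the same two established facts ($\pi_0>0$ and $\sum_i|i|\pi_i<\infty$). The paper's version is shorter and needs only stationarity plus non-negativity; yours requires justifying the occupation-measure identity and the convolution decomposition (which is sound as written --- note that only the ordinary Markov property at each fixed $\ell$ is needed, since you sum over deterministic last-visit times), but in exchange it gives a more transparent probabilistic reading of the constant as a per-cycle cost and makes the uniformity in $T$ visible as a consequence of regeneration. One stylistic remark: the paper first reduces $\E\left[|X^T_0| \,\middle\vert\, X^T_T = 0\right]$ to $\E\left[|X^0_{-T}| \,\middle\vert\, X^0_0 = 0\right]$ using time-homogeneity of the family of chains; your re-indexing to a forward chain $(W_k)$ accomplishes the same bookkeeping.
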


\begin{proof}
    \revision{See Appendix \ref{proof:MC_bound}.}
\end{proof}

\noindent\revision{In other words, Proposition \ref{MC_bound} shows that the expected absolute value of the DTMC at $t = 0$ is at most $1/\pi_0$ times the expected absolute value of the stationary DTMC. This allows us to leverage the desirable property of the limiting distribution without requiring the DTMC to exhibit stationary behavior at $t = 0$.}

The Markov chain model was constructed to ensure $\big| \E\big[|X^n_0| \big| X^n_n = 0 \big] \big | = \Delta_\beta(n, n)$ for all $n \in \N$. As a result, Proposition \ref{MC_bound} directly implies Corollary \ref{on_opt}. 

\begin{corollary}
    \revision{Assume that all parameters have been re-scaled so that $\beta = 1$.}
    There exists $\eta_{\beta, 0} < \infty$ such that $\Delta_\beta(n, n) \leq \eta_{\beta, 0}$ for all $n \in \N$. Additionally, $\rho_\beta(n, n) \leq {\rho}^{=}_\beta$ for any $n \in \N$, where ${\rho}^{=}_\beta = p_1 \eta_{\beta, 0}$.
    \label{on_opt}
\end{corollary}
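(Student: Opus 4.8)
The plan is to obtain the corollary by directly chaining the two bounds already in hand: the pathwise regret bound $\rho_\beta(n, T) \le p_1 \Delta_\beta(n, T)$ from the preliminaries and the uniform Markov-chain bound of Proposition \ref{MC_bound}. The only genuine content is to match the continuous-time quantity $\Delta_\beta(n, n)$ with the conditional expectation $\E[|X^n_0| \mid X^n_n = 0]$ appearing in Proposition \ref{MC_bound}; once that identification is made, both claimed inequalities follow immediately.

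First I would record that the rescaling $\beta = 1$ together with the hypothesis $\alpha = \beta$ forces $T = n / \alpha = n$, so the starting configuration is $(n, n)$. Since $\beta \cdot n = n$ equals the starting inventory, this configuration lies exactly on the $\beta$-line; in the notation of the chain, $X^n_n = R(n, 1, n, \mathcal{N}_n) - n = 0$, so the chain indeed starts in state $0$. Next, because $\beta \cdot 0 = 0$, the relative coordinate and the absolute inventory position coincide at the terminal time: $X^n_0 = R(0, 1, n, \mathcal{N}_n)$ is precisely the final inventory position. Hence $\E[|X^n_0| \mid X^n_n = 0] = \Delta_\beta(n, n)$, which is exactly the identity the chain was constructed to satisfy.

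Defining $\eta_{\beta, 0} := \frac{1}{\pi_0} \sum_{i \in \Z} |i| \pi_i$, which is finite (by the preceding discussion) and independent of $n$, Proposition \ref{MC_bound} then yields $\Delta_\beta(n, n) \le \eta_{\beta, 0}$ for every $n \in \N$, giving the first assertion. Applying the preliminary bound $\rho_\beta(n, T) \le p_1 \Delta_\beta(n, T)$ with $T = n$ gives $\rho_\beta(n, n) \le p_1 \Delta_\beta(n, n) \le p_1 \eta_{\beta, 0} = {\rho}^{=}_\beta$, completing the argument.

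There is no real obstacle here: the substantive work was already done in establishing finiteness of $\sum_{i \in \Z} |i| \pi_i$ (via the $D/M/1$ and $M/D/1$ queue comparisons) and in Proposition \ref{MC_bound}. The one point requiring care is the bookkeeping that identifies $\Delta_\beta(n, n)$ with the conditional expected absolute terminal value of the chain started at $0$ — in particular, verifying that the chain's relative-coordinate definition reduces to the true final inventory position exactly at $t = 0$, and that starting inventory $n$ at $T = n$ corresponds to the initial state $X^n_n = 0$ under the $\beta = 1$ rescaling.
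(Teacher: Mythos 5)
Your argument is correct and matches the paper's own route exactly: the paper likewise notes that the DTMC was constructed so that $\E\big[|X^n_0| \,\big\vert\, X^n_n = 0\big] = \Delta_\beta(n, n)$, invokes Proposition \ref{MC_bound} to obtain the uniform bound $\eta_{\beta,0} = \frac{1}{\pi_0}\sum_{i \in \Z}|i|\pi_i$, and then applies $\rho_\beta(n,T) \leq p_1 \Delta_\beta(n,T)$. Your explicit bookkeeping verifying that the initial state is $0$ and that the relative coordinate coincides with the true final inventory position at $t = 0$ is a welcome elaboration of what the paper leaves implicit.
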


\subsection{Starting Above the $\beta$-Line}

We next study the case in which the sample path starts above the $\beta$-line. As a useful preliminary, we begin by showing that a similar uniform bound exists when the sample path begins no more than one unit below the $\beta$-line.

\begin{lemma}
    \revision{Assume that all parameters have been re-scaled so that $\beta = 1$.}
    There exists $\eta_{\beta, 1} < \infty$ such that $\left\{  \Delta_\beta(s - 1, s / \beta) \right\} \leq \eta_{\beta, 1}$ for all $s \in \N$.
    \label{unif_k_bound}
\end{lemma}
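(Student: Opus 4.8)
The plan is to recast the statement in terms of the DTMC of Section \ref{sec:MC_model}, exactly as Corollary \ref{on_opt} recast the $\alpha = \beta$ case. After rescaling so that $\beta = 1$ we have $s/\beta = s$, and starting with inventory $s - 1$ at time $s$ places the sample path one unit below the $\beta$-line, i.e. $X^s_s = -1$, with final absolute inventory position $\Delta_\beta(s - 1, s) = \E[\,|X^s_0| \mid X^s_s = -1\,]$. It therefore suffices to show $\sup_{s \in \N} \E[\,|X^s_0| \mid X^s_s = -1\,] < \infty$, and I would take $\eta_{\beta, 1}$ to be this supremum.

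The key structural observation is that from any state $i \le -1$ the transition probabilities are nonzero only for $j \le i + 1 \le 0$, so the chain increases by at most one unit per step while negative and cannot reach a positive state without first passing through $0$; hence, started from $-1$, its first-passage time $\kappa$ into $\N_0$ lands exactly on $0$. I would split the horizon at $\kappa$. On the event $\{\kappa \le s\}$ the strong Markov property restarts the chain at state $0$ with $s - \kappa$ steps remaining, and since Proposition \ref{MC_bound} bounds $\E[\,|X^{s - \kappa}_0| \mid X^{s - \kappa}_{s - \kappa} = 0\,]$ by $\tfrac{1}{\pi_0}\sum_{i \in \Z} |i|\pi_i$ uniformly in the number of remaining steps, taking expectation over $\kappa$ gives $\E[\,|X^s_0|\,\mathbf{1}(\kappa \le s) \mid X^s_s = -1\,] \le \tfrac{1}{\pi_0}\sum_{i \in \Z} |i|\pi_i$.

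On the complementary event $\{\kappa > s\}$ the chain remains at states $\le -1$ throughout, so every integer epoch is a below-occurrence and the trajectory coincides with that of the below-restricted chain $(Z^s_\tau)$ initialized at $Z^s_s = -1$. Pathwise this gives $|X^s_0|\,\mathbf{1}(\kappa > s) \le |Z^s_0|\,\mathbf{1}(E_s)$, where $E_s$ is the event that the associated queue $\mathcal{Q}'$, started with a single job, has not emptied through its first $s$ departures. On $E_s$ the number in system after those departures equals $1 + A_s - s$, where $A_s$ is the number of arrivals during the corresponding service completions, with $\E[A_s] \le (\lambda_1/\beta)\,s$ and $\lambda_1/\beta < 1$. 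A standard exponential tail bound on $\Prob(A_s \ge s)$ then shows that $\E[(1 + A_s - s)^+]$—and hence this contribution—is bounded uniformly in $s$. Summing the two contributions yields a finite constant $\eta_{\beta, 1}$.

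I expect the main obstacle to be the $\{\kappa > s\}$ term: one must rule out the possibility that, as the horizon grows without bound, the scenarios in which the sample path never recrosses the $\beta$-line contribute an unbounded expected deviation. This is precisely where the stability of $\mathcal{Q}'$ (equivalently the negative drift $\lambda_1 < \beta$ and $\sum_i i\,\omega_i < \infty$) is indispensable; the return-to-line term, by contrast, follows immediately from Proposition \ref{MC_bound} together with the strong Markov property.
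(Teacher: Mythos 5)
Your proof is correct, but it takes a genuinely different route from the paper's. The paper's proof of Lemma \ref{unif_k_bound} is a one-line reduction: repeat the argument of Section \ref{sec:MC_model} and Proposition \ref{MC_bound} with the chain started in state $-1$, i.e., keep the term $\E\left[|X^0_t| \mid X^0_0 = -1\right]\pi_{-1}$ rather than the $i = 0$ term in the stationarity identity, which yields the bound $\frac{1}{\pi_{-1}}\sum_{i \in \Z}|i|\pi_i < \infty$ since $\pi_{-1} > 0$ by irreducibility and positive recurrence. You instead decompose on the first passage time $\kappa$ into $\N_0$ (correctly noting that from below the chain can only enter $\N_0$ at state $0$, since $q_{i,j} = 0$ for $i \le -1$ and $j > i + 1$), invoke the strong Markov property and Proposition \ref{MC_bound} on $\{\kappa \le s\}$, and control $\{\kappa > s\}$ by identifying the trajectory with the $M/D/1$-type chain and bounding $\E[(1 + A_s - s)^+]$ with $A_s \sim \mathrm{Poisson}(\lambda_1 s)$ and $\lambda_1 < 1$ via a Chernoff-type tail estimate. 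Both arguments are sound. Yours costs an extra tail computation but reuses Proposition \ref{MC_bound} as a black box and yields a constant expressed through $\pi_0$ plus an exponentially vanishing remainder; the paper's is shorter but requires noticing that the one-term-drop trick in the proof of Proposition \ref{MC_bound} works verbatim from any fixed initial state with positive stationary mass. One presentational caution: your pathwise domination $|X^s_0|\,\mathbf{1}(\kappa > s) \le |Z^s_0|\,\mathbf{1}(E_s)$ should note that on $\{\kappa > s\}$ the restricted chain never visits state $0$, so the partial-job transitions of $\mathcal{Q}'$ (the row involving $g(\cdot)$) are never used and $A_s$ is indeed a sum of $s$ i.i.d.\ $\mathrm{Poisson}(\lambda_1)$ variables; as written this is implicit but worth making explicit.
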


\begin{proof}
    The argument follows Section \ref{sec:MC_model} except that the DTMCs begin in state -1 at time $t = T$.
\end{proof}

\begin{lemma}
    \revision{Assume that all parameters have been re-scaled so that $\beta = 1$.}
    There exists $\hat{\eta}_\beta < \infty$ such that $\max_{s \in \R^+}\left\{  \Delta_\beta(\lfloor s\rfloor, s / \beta) \right\} \leq \hat{\eta}_\beta$.
    \label{unif_floor_bound}
\end{lemma}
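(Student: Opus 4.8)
The plan is to prove Lemma \ref{unif_floor_bound} by reducing the general case of an arbitrary real starting position $s \in \R^+$ on the $\beta$-line to the two integer-starting cases already established: Corollary \ref{on_opt} (starting exactly on the $\beta$-line at an integer inventory level) and Lemma \ref{unif_k_bound} (starting one unit below the $\beta$-line). Since all parameters are rescaled so that $\beta = 1$, the point $(\lfloor s \rfloor, s)$ lies a vertical distance $s - \lfloor s \rfloor \in [0, 1)$ below the $\beta$-line. The key observation is that the quantity $\Delta_\beta(\lfloor s \rfloor, s)$ depends on $s$ only through the fractional part $\fractional(s) = s - \lfloor s \rfloor$ together with the discrete evolution already analyzed: starting at integer inventory $\lfloor s \rfloor$ with time $s$ remaining is equivalent, after the first fractional time interval of length $\fractional(s)$ elapses, to landing (almost surely) at some integer inventory level on or near the $\beta$-line with integer time $\lfloor s \rfloor$ remaining, at which point the DTMC analysis of Section \ref{sec:MC_model} applies directly.

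First I would condition on the position of the sample path at the first integer time point, i.e.\ at time $\lfloor s \rfloor$, after elapsing the initial partial interval of length $\fractional(s)$. During this short interval the inventory drops by the (Poisson-distributed) number of arrivals, while the $\beta$-line drops by exactly $\fractional(s)$; hence the sample path's displacement relative to the $\beta$-line at time $\lfloor s \rfloor$ is a random integer whose distribution is governed by a Poisson number of arrivals over an interval of length $\fractional(s) < 1$. I would write $\Delta_\beta(\lfloor s \rfloor, s)$ as a weighted average, over this conditioning, of the absolute-expected-final-position quantities $\Delta_\beta(\cdot, \lfloor s \rfloor)$ starting from the resulting integer states at integer time $\lfloor s \rfloor$. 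Each such term starts at an integer time with the DTMC $\left(X^{\lfloor s \rfloor}_t\right)$ in a well-defined integer state relative to the $\beta$-line.

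Next I would bound each conditioned term uniformly. By the same argument as Proposition \ref{MC_bound} (generalized to an arbitrary fixed initial state rather than state $0$), the expected absolute value of $X^T_0$ starting from any fixed initial state $k$ is bounded by a constant depending on $k$ but not on $T$; moreover the stationary bound $\sum_{i \in \Z} |i| \pi_i < \infty$ controls the tail. The first partial step moves the path down by at most a Poisson-many units with mean $\fractional(s)\lambda_{1,2} < \lambda_{1,2}$, so the initial integer state after conditioning has a light (Poisson) tail uniformly in $s$. Combining the Poisson tail of the conditioning distribution with the per-state bounds — which grow at most linearly in the magnitude of the starting state, by the triangle inequality applied to the displacement — yields a finite supremum over all $s \in \R^+$. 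Taking $\hat{\eta}_\beta$ to be this supremum completes the argument.

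The main obstacle I anticipate is making the per-initial-state bound uniform in the starting state in a way that interacts correctly with the Poisson tail of the first partial step. Proposition \ref{MC_bound} is stated only for initial state $0$, and Lemma \ref{unif_k_bound} only for initial state $-1$; to average over an unboundedly negative Poisson-distributed initial displacement I need a bound of the form $\E\big[|X^T_0| \,\big|\, X^T_T = k\big] \leq C_1 + C_2|k|$ with constants independent of $T$ and $k$. Establishing this linear-in-$k$ control requires either re-running the restricted-chain / stable-queue decomposition with a general starting state, or arguing that from state $k$ the path first returns to a neighborhood of the $\beta$-line within an expected number of steps linear in $|k|$ (since the drift toward the $\beta$-line is strictly negative above it and strictly positive below it, by $\lambda_1 < \beta < \lambda_{1,2}$), and then invoking the already-established bounded behavior near the line. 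Verifying that this drift argument yields exactly linear, $T$-independent growth — and that it meshes with the finiteness of $\sum_{i \in \Z}|i|\pi_i$ — is the delicate quantitative step; everything else is routine conditioning and summation of a convergent Poisson-weighted series.
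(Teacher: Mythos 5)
Your opening move --- conditioning on the sample path's state at the first integer time $\lfloor s \rfloor$ after the initial partial interval of length $s - \lfloor s\rfloor$ --- is exactly how the paper's proof begins, and the resulting decomposition $\Delta_\beta(\lfloor s\rfloor, s) = \sum_{k\geq 0} f\big((s-\lfloor s\rfloor)\lambda_1, k\big)\,\Delta_\beta(\lfloor s\rfloor - k, \lfloor s\rfloor)$ is its equation \eqref{floor_lemma_step0}. (One correction: during that interval the path sits below the $\beta$-line, so only offline customers are accepted and the relevant Poisson mean is $(s-\lfloor s\rfloor)\lambda_1$, not $(s-\lfloor s\rfloor)\lambda_{1,2}$; your ``at most Poisson with mean $\fractional(s)\lambda_{1,2}$'' phrasing would in any case not transfer to a bound on the weighted sum without a monotonicity argument for $k \mapsto \Delta_\beta(\lfloor s\rfloor - k,\lfloor s\rfloor)$, which you do not supply.) The genuine gap is what you do next. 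You propose to bound each conditioned term via $\E\big[|X^T_0| \,\big|\, X^T_T = -k\big] \leq C_1 + C_2 k$ uniformly in $T$ and $k$ and then sum against the Poisson tail --- and you correctly observe that neither Proposition \ref{MC_bound} nor Lemma \ref{unif_k_bound} supplies this, that it would require re-running the restricted-chain analysis from a general starting state or a separate drift/hitting-time argument, and that you have not carried that step out. As written, the argument rests on an unproven estimate; the ``delicate quantitative step'' you flag is in fact the entire content of the lemma under your strategy.

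The paper avoids any per-state bound with one observation you are missing: for $k \geq 1$ and $x \in (0,1)$ the Poisson pmf $f(x,k)$ is increasing in $x$ (equation \eqref{partial_dv}), so $f\big((s-\lfloor s\rfloor)\lambda_1, k\big) \leq f(\lambda_1, k) = f\big((\lceil s\rceil - \lfloor s\rfloor)\lambda_1, k\big)$ term by term. Splitting off the $k=0$ term, which is at most $\Delta_\beta(\lfloor s\rfloor,\lfloor s\rfloor) \leq \eta_{\beta,0}$ by Corollary \ref{on_opt}, and applying this domination to the $k\geq 1$ tail, the remaining sum is recognized as the identical conditional decomposition of $\Delta_\beta(\lfloor s\rfloor, \lceil s\rceil)$ --- the ``start one unit below the line at an integer time'' quantity already bounded by $\eta_{\beta,1}$ in Lemma \ref{unif_k_bound}. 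This yields $\Delta_\beta(\lfloor s\rfloor, s) \leq \eta_{\beta,0} + \eta_{\beta,1}$ with no bound that grows in the starting state ever being needed. To salvage your route you would have to actually prove the linear-in-$k$ control, made uniform in $T$ and valid on the event that the $\beta$-line is never reached before the horizon ends; the paper's identification of the dominated sum with $\Delta_\beta(\lfloor s\rfloor, \lceil s\rceil)$ is the shortcut that reduces the lemma to results already in hand.
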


\begin{proof}
    \revision{See Appendix \ref{proof:unif_floor_bound}.}
\end{proof}

If the initial inventory $n$ is above the $\beta$-line (i.e., $\alpha > \beta$), a sample path can evolve in one of two ways, both of which entail bounded regret. One possibility is that the sample path `jumps' below the $\beta$-line prior to the end of the horizon at least once. If it does so at some time $t > 0$, the inventory position is $\lfloor \beta t \rfloor$; thus, Lemma \ref{unif_floor_bound} implies that the expected final absolute inventory position is at most $\hat{\eta}_{\beta}$ in this case. The other possibility is that the sample path stays at or above the $\beta$-line for the entire horizon; in this case, the $\beta$-\LT{} policy gives the hindsight-optimal solution (Lemma \ref{aboveHO}). Proposition \ref{above_opt} formalizes this argument.

\begin{proposition}
    \revision{Assume that all parameters have been re-scaled so that $\beta = 1$.}
    There exists $\rho_\beta^+ < \infty$ such that $\rho_\beta(n, n / \alpha) \leq \rho_\beta^+$ for all $n \in \N$ and $\alpha > \beta$.\label{above_opt}
\end{proposition}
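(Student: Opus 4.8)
The plan is to bound the regret directly through a dichotomy on the sample path, rather than by bounding $\Delta_\beta(n, n/\alpha)$ globally. The global approach fails in this regime: when $\alpha \gg \beta = 1$ the horizon $T = n/\alpha$ is tiny, so the path almost surely stays far above the $\beta$-line and the terminal inventory is close to $n$, forcing $\Delta_\beta(n, n/\alpha)$ to grow without bound in $n$. Lemma \ref{aboveHO} tells us, however, that exactly these paths incur \emph{zero} regret, so the crude inequality $\rho_\beta \le p_1 \Delta_\beta$ is hopelessly lossy on that event and must be replaced there by the exact value $0$. Isolating this no-crossing event is what makes the argument work.

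Concretely, after rescaling so that $\beta = 1$ (so the initial inventory $n$ sits strictly above the line, since $\beta T = n/\alpha < n$), I would partition the sample space according to whether the path ever drops strictly below the $\beta$-line during $(0, T]$. On the event $E_0$ that it never does, Lemma \ref{aboveHO} gives regret exactly $0$. On the complementary event $E_1$, let $\tau$ be the first time the path crosses below the line. Since the relative position moves \emph{away} from the line between arrivals (the line $\beta t$ decreases as real time elapses) and drops by exactly one at each arrival, a first crossing at $\tau$ forces the pre-jump inventory to equal $\lceil \beta\tau\rceil$ and the post-jump inventory to equal $\lfloor \beta\tau\rfloor$ (almost surely $\beta\tau\notin\Z$). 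Thus at time $\tau$ the system occupies the state $(\lfloor \beta\tau\rfloor, \tau)$, exactly one unit below the line.

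The key step is a strong-Markov / memorylessness argument: because $\tau$ is an arrival epoch of independent Poisson processes, the post-$\tau$ evolution is a fresh $\beta$-\LT{} process started from $(\lfloor \beta\tau\rfloor, \tau)$ that determines the same terminal inventory at $t = 0$ as the original path. Hence the conditional expected absolute final position, given a first crossing at $\tau$, equals $\Delta_\beta(\lfloor \beta\tau\rfloor, \tau)$, which Lemma \ref{unif_floor_bound} bounds by $\hat{\eta}_\beta$ uniformly in $\tau$. Applying the path-wise bound regret $\le p_1 |\text{final position}|$ only on $E_1$ (where it is valid and now tight enough) and regret $= 0$ on $E_0$, the tower property yields
\begin{equation*}
\rho_\beta(n, n/\alpha) \le p_1\, \E\big[\,|\text{final position}|\;\mathbf{1}(E_1)\,\big] = p_1 \int \Delta_\beta(\lfloor \beta\tau\rfloor, \tau)\, \mu(\dtau) \le p_1 \hat{\eta}_\beta\, \Prob(E_1) \le p_1 \hat{\eta}_\beta,
\end{equation*}
where $\mu$ is the (sub-probability) law of the first-crossing time restricted to $E_1$. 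Setting $\rho_\beta^+ = p_1 \hat{\eta}_\beta$ then proves the claim for every $n \in \N$ and every $\alpha > \beta$, with a bound depending only on $\beta$, $\lambda_1$, and $\lambda_2$.

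The main obstacle I anticipate is the careful bookkeeping of the dichotomy: resisting the temptation to invoke $\rho_\beta \le p_1 \Delta_\beta$ globally, and instead cleanly separating the zero-regret no-crossing event from the crossing event. Making this rigorous requires a precise justification that the first crossing deposits the process exactly one unit below the line in state $(\lfloor \beta\tau\rfloor, \tau)$, together with a uniform application of Lemma \ref{unif_floor_bound} across the continuum of possible crossing times $\tau$ via the strong Markov property. The remaining integration over $\mu$ is then routine, since $\hat{\eta}_\beta$ is a bound free of $\tau$, $n$, and $\alpha$.
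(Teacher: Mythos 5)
Your proposal is correct and follows essentially the same route as the paper: you condition on whether the sample path ever drops below the $\beta$-line, invoke Lemma \ref{aboveHO} for zero regret on the no-crossing event and Lemma \ref{unif_floor_bound} (via the state $(\lfloor \beta\tau\rfloor, \tau)$ at the first crossing) on the complement, arriving at the same constant $\rho_\beta^+ = p_1\hat{\eta}_\beta$. The paper states this dichotomy more tersely, leaving the strong-Markov justification and the integration over the crossing-time law implicit, so your write-up is simply a more detailed version of the same argument.
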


\begin{proof}
    Let $X(t)$ denote the sample path. Then,
    \begin{align}
        \rho_\beta(n, n / \alpha) & \leq 0 \cdot \Prob\big( X(t) \geq \beta t \enspace \forall t \in [0, T]\big) \nonumber \\
        & \quad + p_1 \hat{\eta}_\beta \cdot \Prob\big( \exists t \in [0, T] : X(t) < \beta t \big) \leq p_1 \hat{\eta}_\beta.
    \end{align}
    Thus, $\rho_\beta^+ = p_1 \hat{\eta}_\beta < \infty$ is the desired uniform bound.
\end{proof}

The proof also implies that, if the probability of crossing the $\beta$-line were to approach zero as $n \to \infty$, the regret would also tend to zero. This is indeed the case when $\alpha > \lambda_{1, 2}$.

\begin{corollary}
    For any $\alpha > \lambda_{1, 2}$ and $\beta \in (\lambda_1, \lambda_{1, 2})$, $\lim_{n \to \infty} \rho_\beta(n, n / \alpha) = 0$.
    \label{high_alpha_corollary}
\end{corollary}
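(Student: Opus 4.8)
The plan is to start from the regret bound established within the proof of Proposition~\ref{above_opt}: with $T = n/\alpha$,
\[
\rho_\beta(n, n/\alpha) \le p_1 \hat{\eta}_\beta \cdot \Prob\big(\exists\, t \in [0, T] : X(t) < \beta t\big),
\]
so it suffices to show the crossing probability on the right tends to $0$ as $n \to \infty$ whenever $\alpha > \lambda_{1,2}$. Since the $\beta$-\LT{} policy accepts every arrival while the sample path is at or above the $\beta$-line, before the first crossing we have $X(t) = n - M(t)$ with $M(t) = N_{1,T}(t) + N_{2,T}(t)$ the total arrival count; hence the crossing event equals $\{\exists\, t \in [0,T] : M(t) > n - \beta t\}$. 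Reparametrizing by elapsed time $s = T - t$ and setting $\tilde{M}(s) = M(T-s)$, a rate-$\lambda_{1,2}$ Poisson process with $\tilde{M}(0) = 0$, and using $n = \alpha T$, the event becomes $\{\exists\, s \in [0,T] : \tilde{M}(s) > \beta s + c\}$ with $c = (\alpha - \beta)T > 0$.

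First I would record why the finiteness of the horizon is indispensable. The process $\tilde{M}(s) - \beta s$ has strictly positive drift $\lambda_{1,2} - \beta > 0$, so over an infinite horizon it crosses any fixed level almost surely; correspondingly, because $\beta < \lambda_{1,2}$ one checks there is no $\theta > 0$ with $\theta \beta \ge \lambda_{1,2}(e^\theta - 1)$, so the exponential supermartingale gives no direct bound on the supremum. The crossing probability is small only because the level $c = (\alpha - \beta)T$ and the horizon $T$ grow together, and because $\alpha > \lambda_{1,2}$ opens a gap that is linear in $T$ between $c$ and the terminal mean $\E[\tilde{M}(T) - \beta T] = (\lambda_{1,2} - \beta)T$.

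The main step is a union bound over unit time intervals combined with a Poisson tail estimate. Since $\tilde{M}$ is nondecreasing and $s \mapsto \beta s$ is increasing, for $s \in [k, k+1]$ we have $\tilde{M}(s) - \beta s \le \tilde{M}(k+1) - \beta k$, whence
\[
\Prob\big(\exists\, s \in [0,T] : \tilde{M}(s) > \beta s + c\big) \le \sum_{k=0}^{\lceil T \rceil - 1} \Prob\big(\tilde{M}(k+1) \ge c + \beta k\big).
\]
For each term, $\tilde{M}(k+1) \sim \mathrm{Poisson}\big(\lambda_{1,2}(k+1)\big)$, and the deviation above the mean, $d_k := c + \beta k - \lambda_{1,2}(k+1)$, is decreasing in $k$; using $k \le T$ and $c = (\alpha - \beta)T$ one verifies $d_k \ge (\alpha - \lambda_{1,2})T - \lambda_{1,2} \ge \tfrac12(\alpha - \lambda_{1,2})T$ for $T$ large, while the threshold obeys $c + \beta k \le \alpha T$. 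A standard Chernoff bound for the Poisson tail then yields $\Prob\big(\tilde{M}(k+1) \ge c + \beta k\big) \le \exp\!\big(-\tfrac{d_k^2}{2(c + \beta k)}\big) \le \exp\!\big(-\tfrac{(\alpha - \lambda_{1,2})^2}{8\alpha}\, T\big)$ uniformly in $k$, so the sum is at most $\lceil T \rceil \exp\!\big(-\tfrac{(\alpha - \lambda_{1,2})^2}{8\alpha} T\big)$, which vanishes as $T = n/\alpha \to \infty$. Feeding this back into the regret bound gives $\rho_\beta(n, n/\alpha) \to 0$.

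The hardest part is the recognition in the second paragraph: a direct martingale or maximal-inequality attack fails because of the positive drift, so one must exploit that the barrier height scales with the horizon and that $\alpha > \lambda_{1,2}$ forces a linear-in-$T$ margin. Once that is understood, the remainder — selecting the unit-interval discretization and confirming the Poisson deviations $d_k$ are uniformly $\Theta(T)$ against means that are $O(T)$ — is routine concentration bookkeeping.
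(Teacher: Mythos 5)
Your proposal is correct and follows essentially the same route as the paper's proof in Appendix \ref{proof:high_alpha_corollary}: reduce to the probability of crossing the $\beta$-line via the bound in Proposition \ref{above_opt}, discretize time into $O(T)$ unit intervals with a union bound, and apply a Poisson Chernoff/Bennett tail estimate in which the deviation is $\Theta(T)$ (of order $(\alpha - \lambda_{1,2})T$) against a mean of order $T$, yielding a $T e^{-\Theta(T)}$ bound. The only cosmetic difference is that the paper first dominates the $\beta$-crossing event by the $\lambda_{1,2}$-crossing event before discretizing, whereas you work with the $\beta$-line directly; both give the same conclusion.
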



\begin{proof}
    \revision{See Appendix \ref{proof:high_alpha_corollary}}
\end{proof}

\subsection{Starting Below the $\beta$-Line}

If the initial inventory $n$ is below the $\beta$-line (equivalently, $\alpha < \beta$), a sample path can evolve in one of two ways, both of which entail bounded regret. The first possibility is that the sample path intersects the $\beta$-line prior to the end of the horizon at least once; in this case, Corollary \ref{on_opt} implies that the expected final absolute inventory position is at most $\eta_{\beta, 0}$. The other possibility is that the sample path never intersects the $\beta$-line prior to the end of the horizon; in this case, the $\beta$-\LT{} policy gives the hindsight-optimal solution (Lemma \ref{belowHO}). The proof of Proposition \ref{below_opt} formalizes this argument.

\begin{proposition}
    \revision{Assume that all parameters have been re-scaled so that $\beta = 1$.}
    There exists $\rho_\beta^- < \infty$ such that $\rho_\beta(n, n / \alpha) \leq \rho_\beta^-$ for all $n \in \N$ and $\alpha < \beta$.\label{below_opt}
\end{proposition}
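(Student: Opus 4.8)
The plan is to mirror the structure of the proof of Proposition \ref{above_opt}, partitioning the sample space according to whether the $\beta$-induced sample path ever reaches the $\beta$-line during the horizon. Since $\alpha < \beta = 1$, the starting point $(n, n/\alpha)$ lies strictly below the line, and while the path remains below the line the $\beta$-\LT{} policy rejects every online customer and accepts every offline customer; thus the inventory is piecewise constant and drops by one at each offline arrival. I would define the event $A = \{\exists\, t \in (0, n/\alpha] : R(t, \beta, n, \mathcal{N}_{n/\alpha}) = \beta t\}$ that the path reaches the line before the horizon ends, and let $B$ be its complement (the path stays strictly below the line throughout $(0, n/\alpha]$). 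Because the path can only move downward while below the line, it can reach the line solely by the line descending to meet it, so on $A$ the first reaching occurs \emph{on} the line and $A, B$ form a clean dichotomy.

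On the event $B$, no online customer is ever accepted, so $R(t, \beta, n, \mathcal{N}_{n/\alpha}) = n - N_{1, n/\alpha}(t) < \beta t$ for all $t \in (0, n/\alpha]$. This is precisely the hypothesis of Lemma \ref{belowHO}, so the $\beta$-\LT{} policy attains the hindsight-optimal value and contributes zero regret on $B$.

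The crux is the event $A$. Here I would first argue that, because the inventory is integer-valued and constant between offline arrivals while the line $\beta t = t$ descends continuously, the path almost surely first reaches the line exactly at a lattice point: the first reaching time $t^\ast$ satisfies $R(t^\ast) = t^\ast \in \N$, so the state is the on-line point $(t^\ast, t^\ast)$. The strong Markov property then identifies the continuation after $t^\ast$ in distribution with a fresh $\beta$-\LT{} process started on the line with inventory $t^\ast$ and horizon $t^\ast$, whose expected absolute final inventory position is $\Delta_\beta(t^\ast, t^\ast) \le \eta_{\beta, 0}$ by Corollary \ref{on_opt}. Averaging over the random value of $t^\ast$ gives that the conditional expected absolute final inventory position satisfies $\E[\,|\text{final position}| \mid A\,] \le \eta_{\beta, 0}$. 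Since the regret of any realization is at most $p_1$ times its absolute final inventory position, this yields $\E[(\text{regret})\, \mathbb{1}_A] \le p_1 \eta_{\beta, 0}\, \Prob(A)$. Combining the two cases,
\[
    \rho_\beta(n, n/\alpha) \;\le\; 0 \cdot \Prob(B) + p_1 \eta_{\beta, 0}\, \Prob(A) \;\le\; p_1 \eta_{\beta, 0},
\]
so $\rho_\beta^- = p_1 \eta_{\beta, 0} = {\rho}^{=}_\beta$ is the desired uniform bound, independent of $n$ and of $\alpha < \beta$.

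The main obstacle is the single genuine subtlety on event $A$: justifying that the first reaching of the $\beta$-line occurs exactly on the line at an integer time with integer inventory, so that Corollary \ref{on_opt} applies verbatim with integer arguments, together with a clean invocation of the strong Markov property that is uniform over the random reaching state $t^\ast$. Everything else parallels Proposition \ref{above_opt}, with Lemma \ref{belowHO} playing the role of Lemma \ref{aboveHO} and Corollary \ref{on_opt} playing the role of Lemma \ref{unif_floor_bound}.
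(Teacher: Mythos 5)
Your proposal is correct and follows essentially the same route as the paper's proof: the same dichotomy between never intersecting the $\beta$-line (Lemma \ref{belowHO}, zero regret) and intersecting it at least once (Corollary \ref{on_opt}, regret at most $p_1 \eta_{\beta, 0}$), yielding the same bound $\rho_\beta^- = p_1 \eta_{\beta, 0}$. Your extra care in verifying that the first intersection almost surely occurs at an integer lattice point $(t^\ast, t^\ast)$, so that Corollary \ref{on_opt} applies with integer arguments, is a detail the paper leaves implicit but does not change the argument.
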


\begin{proof}
    Let $X(t)$ denote the $\beta$-induced sample path. Then,
    \begin{align}
        \rho_\beta(n, n / \alpha) & \leq 0 \cdot \Prob\big( X(t) < \beta t \enspace \forall t \in (0, T]\big) + \nonumber \\
        & \quad p_1 \eta_{\beta, 0} \cdot \Prob\big( \exists t \in (0, T] : X(t) = \beta t \big) \leq p_1 \eta_{\beta, 0} .
    \end{align}
    Thus, $\rho_\beta^- = p_1 \eta_{\beta, 0} = \rho_\beta^= < \infty$ is the desired uniform bound.
\end{proof}

\noindent Theorem \ref{main} is thus proved with $\hat{\rho}_\beta = \max\left\{ \rho_\beta^=, \rho_\beta^+, \rho_\beta^- \right\}$.

As before, the proof of Proposition \ref{below_opt} implies that regret would approach zero if the probability of intersecting the $\beta$-line were to approach zero. This is the case when $\alpha < \lambda_1$. The proof of Corollary \ref{low_alpha_corollary} mirrors that of Corollary \ref{high_alpha_corollary} (albeit using lower tail bounds) and is omitted for brevity.

\begin{corollary}
    For any $\alpha \in (0, \lambda_1)$ and $\beta \in (\lambda_1, \lambda_{1, 2})$, $\lim_{n \to \infty} \rho_\beta(n, n / \alpha) = 0$.
    \label{low_alpha_corollary}
\end{corollary}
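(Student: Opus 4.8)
The plan is to continue the argument in the proof of Proposition \ref{below_opt}, which already reduces the regret to a single crossing probability: with $T = n/\alpha$ it gives
\begin{equation}
    \rho_\beta(n, n/\alpha) \;\leq\; p_1 \eta_{\beta, 0} \cdot \Prob\big( \exists t \in (0, T] : X(t) = \beta t \big), \nonumber
\end{equation}
so it suffices to show that this probability tends to $0$ as $n \to \infty$ whenever $\alpha < \lambda_1$. While the sample path remains below the $\beta$-line, the $\beta$-\LT{} policy rejects every online customer, so inventory is consumed only by offline arrivals and the path position at remaining time $t$ is exactly $n - N_{1, T}(t)$, precisely as in Lemma \ref{belowHO}. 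Consequently the crossing event is contained in $\big\{ \exists t \in (0, T] : N_{1, T}(t) \leq n - \beta t \big\}$, which is a \emph{lower}-tail event for the offline arrival count (hence the remark that the proof mirrors Corollary \ref{high_alpha_corollary} but with lower tail bounds).

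First I would discretize the continuum ``$\exists t$'' into a union over integer remaining times by exploiting the monotonicity of $N_{1, T}(\cdot)$. If a crossing occurs at some $t_0 \in (k, k+1]$, then since $N_{1, T}$ is non-increasing in its argument we have $N_{1, T}(k+1) \leq N_{1, T}(t_0) \leq n - \beta t_0 < n - \beta k$, so the crossing event is contained in $\bigcup_{k = 0}^{\lceil T \rceil - 1} \big\{ N_{1, T}(k+1) \leq n - \beta k \big\}$, a union of $\bigO(n)$ events. For each term, $N_{1, T}(k+1) \sim \text{Poisson}\big(\lambda_1 (T - k - 1)\big)$, and I would apply the standard Poisson lower-tail (Chernoff) bound $\Prob\big(\text{Poisson}(\mu) \leq \mu - x\big) \leq \exp\!\big(-x^2/(2\mu)\big)$. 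The key quantitative fact is that the gap $x(k)$ between the Poisson mean $\mu(k) = \lambda_1(T - k - 1)$ and the threshold $n - \beta k$ is positive and of order $\Theta(n)$ uniformly over the relevant range $0 \leq k \lesssim n/\beta$ (where the threshold is nonnegative); the hypothesis $\alpha < \lambda_1$ is exactly what produces this. Indeed, over the full horizon the expected offline demand $\lambda_1 T = \lambda_1 n / \alpha$ exceeds $n$ by $\tfrac{n}{\alpha}(\lambda_1 - \alpha) = \Theta(n)$, so in expectation the path is pushed well below the line, and only an $\Omega(n)$ downward deviation of the offline count could force a crossing. Since $x(k)$ is increasing and $\mu(k)$ decreasing in $k$, the normalized deviation satisfies $x(k)^2/\mu(k) \geq x(0)^2/\mu(0) = \Theta(n)$ for every relevant $k$, so each term is $\exp(-\Theta(n))$; the $\bigO(n)$ union bound then yields crossing probability $\bigO(n)\exp(-\Theta(n)) \to 0$ and hence $\rho_\beta(n, n/\alpha) \to 0$.

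The step I expect to be the main obstacle is converting the continuous-time maximal (``for some $t$'') event into a tractable discrete union and then controlling the Poisson lower tail \emph{uniformly} across the entire range of times rather than at a single instant; the monotonicity reduction above and the observation that $x(k)^2/\mu(k)$ is minimized at $k = 0$ are what make this uniformity routine. Everything else is standard Poisson tail estimation, directly paralleling the proof of Corollary \ref{high_alpha_corollary}.
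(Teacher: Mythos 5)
Your proposal is correct and follows exactly the route the paper intends: the paper omits this proof, stating only that it mirrors Corollary \ref{high_alpha_corollary} with lower tail bounds, and your argument is a faithful instantiation of that — reduce to the crossing probability from Proposition \ref{below_opt}, note the path equals $n - N_{1,T}(t)$ until the first crossing, discretize via monotonicity into $\bigO(n)$ integer-time events, and kill each with a Poisson lower-tail Chernoff bound whose exponent is $\Theta(n)$ uniformly because $\alpha < \lambda_1$ forces an $\Omega(n)$ deviation. The only cosmetic difference is that the paper's high-$\alpha$ proof first passes to the $\lambda_{1,2}$-line to make the deviation constant in $t$, whereas you work with the $\beta$-line directly and handle the $t$-dependence by observing the normalized deviation is minimized at $t = 0$; both are fine.
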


\subsection{Discussion}

A benefit of $\beta$-\LT{} policies is that they may achieve asymptotic optimality even when $\lambda_1$ and $\lambda_2$ are not known with certainty. For example, if $\lambda_1 \in [18, 22]$ and $\lambda_2 \in [15, 20]$ but their exact values are unknown, then any $\beta$-\LT{} policy with $\beta \in (22, 33)$ is asymptotically optimal. Observe that the distributions of $\lambda_1$ and $\lambda_2$ were not required to deduce the viable range of $\beta$ --- only their supports. In contrast, other constant-regret policies require exact knowledge of $\lambda_1$ and $\lambda_2$. 

We conclude our discussion by highlighting a connection between the $\beta$-\LT{} policy and the DLP \eqref{baseDLP}. \cite{Hodge2008} derives the expected duration spent above and below the $\beta$-line during an excursion as follows.
\begin{theorem}[\cite{Hodge2008}, p.\ 69]
Let $\nu < 1$ denote the solution to $\nu e^{-\nu} = \lambda_{1, 2}\beta^{-1} e^{-\lambda_{1, 2}/\beta}$, and let $\kappa = \lambda_{1, 2} - \beta \nu$. The expected lengths of time above and below the $\beta$-line during an excursion are $\E[A] = 1/\kappa$ and $\E[B] = (\lambda_{1, 2} - \beta)/\kappa (\beta - \lambda_1)$,
respectively. 
\end{theorem}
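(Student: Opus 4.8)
The plan is to recast an excursion as a piecewise-deterministic Markov process relative to the $\beta$-line and then reduce the two expectations to a single busy-period computation. Measuring everything in elapsed (forward) time, let $W$ denote the signed vertical distance of the sample path from the $\beta$-line. Because the line has height $\beta t$ and $t$ decreases as time elapses, $W$ acquires a deterministic upward drift of rate $\beta$, while each accepted customer lowers the inventory by one and hence moves $W$ down by one. Above the line online customers are accepted, so $W$ receives unit downward jumps at rate $\lambda_{1,2}$; below the line only offline customers are accepted, so the jump rate drops to $\lambda_1$. Thus an excursion consists of an above-phase of length $A$ that starts at $W=0$ and terminates with the downward jump that first carries $W$ below $0$ (leaving an overshoot $W_{\mathrm{end}}\in[-1,0)$), followed by a below-phase of length $B$ that terminates when the continuous upward drift returns $W$ to exactly $0$. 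The hypothesis $\lambda_1<\beta<\lambda_{1,2}$ makes the mean drift negative above the line and positive below it, which is what guarantees that both phases are a.s.\ finite and (via Hodge's $\E[S_0]<\infty$) integrable.

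Second, I would reduce $\E[B]$ to $\E[A]$ by a conservation-of-drift argument. Over a complete excursion $W$ begins and ends at $0$, so the total upward displacement from the drift equals the total downward displacement from the jumps: $\beta(\E[A]+\E[B]) = \E[J_A]+\E[J_B]$, where $J_A,J_B$ count the jumps in the two phases. By Wald's identity applied to the Poisson jump streams, $\E[J_A]=\lambda_{1,2}\E[A]$ and $\E[J_B]=\lambda_1\E[B]$; substituting and rearranging gives $(\lambda_{1,2}-\beta)\E[A]=(\beta-\lambda_1)\E[B]$, i.e.\ $\E[B]=\frac{\lambda_{1,2}-\beta}{\beta-\lambda_1}\E[A]$. (The same identity follows by applying optional stopping to the compensated martingale $W_t-(\text{mean drift})\,t$ on each phase, which additionally yields $\E[|W_{\mathrm{end}}|]=(\lambda_{1,2}-\beta)\E[A]$.) It therefore suffices to establish $\E[A]=1/\kappa$.

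Third, I would compute $\E[A]$ from the D/M/1 structure already identified in the ``Restricted Chains and Stable Queues'' discussion. The above-line sojourn is the busy period of a D/M/1 queue with arrival rate $\beta$ and service rate $\lambda_{1,2}$: its arrival-epoch chain is the restricted chain $(Y^T_\sigma)$, whose stationary law is geometric with ratio $\xi\in(0,1)$ solving $\xi=e^{-(\lambda_{1,2}/\beta)(1-\xi)}$, so the fraction of arrivals that see an empty system is $\pi_0^+=1-\xi$. A renewal-reward argument over busy cycles then gives $\E[A]=\frac{1}{\lambda_{1,2}(1-\xi)}$: each cycle contains $1/(1-\xi)$ arrivals and hence has expected length $\frac{1}{\beta(1-\xi)}$, of which the busy fraction equals the utilization $\beta/\lambda_{1,2}$. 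Finally, the change of variables $\nu=(\lambda_{1,2}/\beta)\xi$ converts the defining equation for $\xi$ into $\nu e^{-\nu}=(\lambda_{1,2}/\beta)e^{-\lambda_{1,2}/\beta}$; since $x=\lambda_{1,2}/\beta>1$ is itself the larger root of this equation, $\nu$ is the smaller root and lies in $(0,1)$. Then $\kappa=\lambda_{1,2}(1-\xi)=\lambda_{1,2}-\beta\nu$, so $\E[A]=1/\kappa$, and combining with the second step gives $\E[B]=\frac{\lambda_{1,2}-\beta}{\kappa(\beta-\lambda_1)}$, as claimed.

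The hard part will be the third step: rigorously matching the \emph{continuous-time} above-line sojourn with the D/M/1 busy period rather than merely matching the embedded arrival-epoch chains, and justifying the renewal-reward bookkeeping -- finiteness of the expected cycle length (exactly the positive recurrence and $\E[S_0]<\infty$ established earlier) and the interchange of durations with jump counts through Wald's identity. A secondary subtlety is that $\xi$ (equivalently $\nu$) is transcendental with no elementary closed form -- it is a Lambert-$W$ value -- so the argument must supply existence and uniqueness of the relevant root in $(0,1)$, which follows from the unimodality of $x\mapsto xe^{-x}$ together with $\lambda_{1,2}/\beta>1$.
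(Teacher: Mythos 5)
The paper does not prove this statement at all: it is imported verbatim from Hodge (2008, p.~69) as a cited external result, so there is no internal proof to compare against. Your derivation is, as far as I can check, correct and self-contained, and every identity lands where it should: the change of variables $\nu=(\lambda_{1,2}/\beta)\xi$ does convert the D/M/1 fixed-point equation $\xi=e^{-(\lambda_{1,2}/\beta)(1-\xi)}$ into $\nu e^{-\nu}=(\lambda_{1,2}/\beta)e^{-\lambda_{1,2}/\beta}$ with $\nu\in(0,1)$ by unimodality of $xe^{-x}$; the drift-balance plus Wald step gives exactly the ratio $\E[B]/\E[A]=(\lambda_{1,2}-\beta)/(\beta-\lambda_1)$ implicit in the stated formulas; and $\kappa=\lambda_{1,2}(1-\xi)$. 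The ``hard part'' you flag also closes cleanly, and more easily than you suggest: you do not need to match the continuous-time sojourn $A$ to the D/M/1 busy period as a \emph{duration}. Writing $A=\tau_K$ with $\tau_k$ the combined Poisson arrival epochs and $K=\min\{k:\tau_k<k/\beta\}$, Wald gives $\E[A]=\E[K]/\lambda_{1,2}$, and $K$ is the first descending ladder epoch of the random walk with i.i.d.\ increments $\mathrm{Exp}(\lambda_{1,2})-1/\beta$ --- which, by the Lindley recursion, is precisely the number of customers served in a D/M/1 busy cycle, so $\E[K]=1/\pi^+_0=1/(1-\xi)$ and $\E[A]=1/(\lambda_{1,2}(1-\xi))=1/\kappa$. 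This ladder-epoch identity is exactly the embedded-chain correspondence the paper already exploits for $(Y^T_\sigma)$, so your route meshes well with the rest of the note.

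Two small cautions. First, do not invoke the paper's $\E[S_0]<\infty$ for integrability of $A$ and $B$: in the paper that fact is \emph{deduced from} Hodge's excursion formula, so leaning on it here is circular. Finiteness should instead come from the positive mean drift (the first ascending ladder epoch of a positive-drift walk with an exponential moment has finite mean, and phase B is a first passage with strictly positive net drift $\beta-\lambda_1$). Second, state the busy-fraction step as a renewal-reward/work-conservation identity rather than anything PASTA-flavored, since arrivals in the D/M/1 system are deterministic; as you have written it the bookkeeping is fine.
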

\noindent The ratio between the expected time above the $\beta$-line (i.e., the expected time during which online orders are accepted) during an excursion and the expected total excursion duration is then
\begin{equation}
    \frac{\E[A]}{\E[A] + \E[B]} 
    = \frac{1}{\kappa} \left( \frac{1}{\kappa} + \frac{\lambda_{1, 2} - \beta}{\kappa (\beta - \lambda_1)} \right)^{-1}
    = \frac{\beta - \lambda_1}{\lambda_2}.
\end{equation}
This is exactly the optimal value of $z_2$ in \eqref{baseDLP} when the initial inventory lies on the $\beta$-line (i.e., $T = n/\beta$). Thus, assuming $T = n / \beta$ and a sufficiently long horizon, the optimal $z_2$ roughly coincides with the expected proportion of time during which online orders are accepted by the $\beta$-\LT{} policy. Although this result is not surprising, it is interesting to observe how we arrive at the optimal DLP proportion via an atypical route. 

The arguments underlying Corollaries \ref{high_alpha_corollary} and \ref{low_alpha_corollary} also imply that the $\beta$-\LT{} policy's online acceptance proportions approach one and zero when $\alpha > \lambda_{1, 2}$ and $\alpha < \lambda_1$, respectively; these are again the respective optimal $z_2$ proportions in each case. If we had a better understanding of the first crossing times of increasing linear boundaries by Poisson processes, particularly in regard to distributional behavior upon scaling, we would expect analogous results for $\alpha \in [\lambda_1, \beta) \cup (\beta, \lambda_{1, 2}]$. Unfortunately, we presently lack the necessary analytical results and/or bounds to formalize the argument for such $\alpha$. Because it is not a focus of this work, we defer to Chapter 3 of \cite{Zacks2017} for a summary of existing results on crossing time distributions for Poisson processes.


\section{Generalization to Multiple Customer Classes}

Suppose now that there are $K > 2$ distinct customer classes (denoted type-1, type-2, ..., type-$K$) with corresponding known arrival rates $\lambda_1, \lambda_2, \ldots, \lambda_K$ and selling prices $p_1 > p_2 > \cdots > p_K$. For each $j \in [K]$, the arrival of class-$j$ customers during $[T, 0]$ is given by the Poisson process $\{N_{j, T}(t), t \in [T, 0]\}$. In this $K$-class setting, we propose a generalization of the $\beta$-\LT{} policy. We define a $\boldsymbol{\beta}$-\LT{} policy with a vector $\bm{\beta} = (\beta_1, \beta_2, \ldots, \beta_{K - 1})$ where each $\beta_j$ satisfies \revision{$\sum_{i = 1}^{j} \lambda_i < \beta_j < \sum_{i = 1}^{j + 1} \lambda_i$}. Unlike in the two-class setting, the operation of the $\bm{\beta}$-\LT{} policy depends on the initial inventory-time ratio $\alpha = n/T$. 


Suppose that $\alpha \in [\beta_{k - 1}, \beta_k]$ for some \revision{$k \in \{2, \ldots, K - 1\}$}. The $\bm{\beta}$-\LT{} policy in this scenario proceeds as follows. We begin by accepting all customers of classes $\{1, 2, \ldots, k\}$ and no other customers until a stopping time $S$. We define $S$ as the earliest time at which the resulting sample path is not in the open cone between the $\beta_{k - 1}$-line and $\beta_k$-line. That is, 
\begin{equation}
    S = \max_{t \leq T} \Big\{t : n - \sum_{j = 1}^{k} N_{j, T}(t) \notin (\beta_{k - 1} t, \beta_k t) \Big\}.
\end{equation}

Observe that $S > 0$. 
After time $t = S$, the policy behaves in one of two ways. If $n - \sum_{j = 1}^{k} N_{j, T}(S) \leq \beta_{k - 1}S$, we operate analogously to the two-class policy with the $\beta_{k - 1}$-line guiding the acceptance decisions. That is, for all $t < S$, customers of classes $\{1, 2, \ldots, k - 1\}$ are always accepted, and customers of class $k$ are accepted if and only if the sample path is at or above the $\beta_{k - 1}$-line. Customers of classes $k + 1$ and above are never accepted. Because $\sum_{i = 1}^{k - 1} \lambda_i < \beta_{k - 1} < \sum_{i = 1}^{k} \lambda_i$, the analysis in Section \ref{sec:two_class} implies that the expected final inventory position in this case is bounded above by some constant $\zeta^+_{k - 1}$ that does not depend on the exact value of $n$ or $T$. In this case, a final inventory position of zero implies that hindsight optimality was achieved; the structure of the policy ensured that the optimal number of class-$k$ customers were accepted and none of the more profitable customers were rejected. A final inventory position of $x > 0$ implies that at most $x$ additional customers of classes $\{k, k + 1, \ldots, K\}$ should have been accepted to achieve hindsight optimality. A final inventory position of $y < 0$ implies that at most $y$ of the accepted customers should have been rejected instead to allow for at most $y$ additional sales to more profitable customers. Thus, the regret in this case is bounded by $p_1 \zeta^+_{k - 1}$.

The other case occurs if $n - \sum_{j = 1}^{k} N_{j, T}(S) \geq \beta_k S$. In this case, the $\beta_k$-line guides the acceptance decisions. That is, for all $t < S$, customers of classes $\{1, 2, \ldots, k\}$ are always accepted, and customers of class $k + 1$ are accepted if and only if the sample path is at or above the $\beta_{k}$-line. Customers of classes $k + 2$ and above are never accepted. Because $\sum_{i = 1}^{k} \lambda_i < \beta_{k} < \sum_{i = 1}^{k + 1} \lambda_i$, the analysis in Section \ref{sec:two_class} implies that the expected final inventory position in this case is bounded above by some constant $\zeta^-_{k}$ that does not depend on $n$ or $T$. Repeating the preceding analysis implies that the regret in this case is bounded above by $p_1 \zeta^-_{k}$.

It remains to consider the initial states for which $\alpha \notin [\beta_{k - 1}, \beta_k]$. When $\alpha < \beta_1$, the $\bm{\beta}$-\LT{} policy accepts only class-1 customers until the sample path intersects the $\beta_1$-line, at which point we simply follow the $\beta_1$-\LT{} policy from Section \ref{sec:two_class} for the remainder of the horizon. As before, the $\bm{\beta}$-\LT{} policy achieves hindsight optimality if the sample path stays below the $\beta_1$-line for the entire horizon. Thus, when $\alpha < \beta_1$, the regret is bounded above by some finite $p_1 \zeta_1^-$. When $\alpha > \beta_K$, the $\bm{\beta}$-\LT{} policy accepts all customers until the sample path crosses or intersects the $\beta_K$-line for the first time. After this time, we reject class-$K$ customers if and only if we are below the $\beta_K$-line; all other customers are always accepted. The $\bm{\beta}$-\LT{} policy achieves hindsight optimality if the sample path stays above the $\beta_K$-line for the entire horizon. Thus, when $\alpha > \beta_K$, the regret is bounded above by some finite $p_1 \zeta_K^+$. Generalizing over all $\alpha > 0$, the $\bm{\beta}$-\LT{} policy's regret is therefore bounded above by $\max \big\{ p_1 \zeta_1^-, p_1 \zeta_1^+, p_1 \zeta_2^-, p_1 \zeta_2^+, \ldots, p_1 \zeta_K^-, p_1 \zeta_K^+ \big\}$ for any $n$ and $T$.

\revision{
Recall that the $\bm{\beta}$-\LT{} policy never accepts customers of classes $k + 2$ and above when $\alpha \in [\beta_{k - 1}, \beta_k]$.
It may be possible to improve performance by allowing for the acceptance of some customers of classes $k + 2$ and above (as in the optimal policy of \cite{Liang1999}). However, our $\bm{\beta}$-\LT{} policy demonstrates that such acceptances are not required to achieve uniformly bounded regret.
}


\section{Numerical Results}

\subsection{Policy Comparison}

We consider a two-class problem with offline and online arrival rates of $\lambda_1 = \lambda_2 = 1$. Offline and online selling prices are set at $p_1 = 2$ and $p_1 = 1$, respectively, and $\alpha = n/T = 1.5$. We begin by simulating 10000 realizations for each $T \in \{50, 100, 500, 1000, 5000, 10000, 25000\}$, and we compare seven choices of $\beta$ between $\lambda_1$ and $\lambda_{1, 2}$. Table \ref{tab:basic_simulations} summarizes the results; the `Average HO' column gives the simulations' average hindsight-optimal objective.

\def\arraystretch{1.15}

\begin{table*}[!h]
\small
\centering
\caption{Simulated performance of $\beta$-\LT{} policies}
\begin{tabular}{rr|rrrrrrr|}
\cline{3-9}
\multicolumn{1}{l}{} & \multicolumn{1}{l|}{} & \multicolumn{7}{c|}{\textbf{Simulated} $\bm{\beta}$\textbf{-\LT{} Regret}} \\ \cline{2-9} 
\multicolumn{1}{l|}{} & \multicolumn{1}{c|}{\textbf{Average HO}} & \multicolumn{1}{c|}{$\bm{\beta = 1.05}$} & \multicolumn{1}{c|}{$\bm{\beta = 1.1}$} & \multicolumn{1}{c|}{$\bm{\beta = 1.25}$} & \multicolumn{1}{c|}{$\bm{\beta = 1.5}$} & \multicolumn{1}{c|}{$\bm{\beta = 1.75}$} & \multicolumn{1}{c|}{$\bm{\beta = 1.9}$} & \multicolumn{1}{c|}{$\bm{\beta = 1.95}$} \\ \hline
\multicolumn{1}{|r|}{$\bm{T}$ \textbf{= 50}} & 124.9353 & \multicolumn{1}{r|}{3.1432} & \multicolumn{1}{r|}{2.7147} & \multicolumn{1}{r|}{1.7761} & \multicolumn{1}{r|}{1.4060} & \multicolumn{1}{r|}{2.4515} & \multicolumn{1}{r|}{3.6997} & 4.1805 \\
\multicolumn{1}{|r|}{\textbf{100}} & 250.1043 & \multicolumn{1}{r|}{4.3772} & \multicolumn{1}{r|}{3.5370} & \multicolumn{1}{r|}{1.9691} & \multicolumn{1}{r|}{1.4428} & \multicolumn{1}{r|}{2.7614} & \multicolumn{1}{r|}{4.9310} & 5.8886 \\
\multicolumn{1}{|r|}{\textbf{500}} & 1250.2120 & \multicolumn{1}{r|}{7.5088} & \multicolumn{1}{r|}{4.6858} & \multicolumn{1}{r|}{1.9436} & \multicolumn{1}{r|}{1.4416} & \multicolumn{1}{r|}{2.9732} & \multicolumn{1}{r|}{7.8447} & 11.5840 \\
\multicolumn{1}{|r|}{\textbf{1000}} & 2500.0233 & \multicolumn{1}{r|}{8.7768} & \multicolumn{1}{r|}{4.8966} & \multicolumn{1}{r|}{1.9924} & \multicolumn{1}{r|}{1.4356} & \multicolumn{1}{r|}{3.0006} & \multicolumn{1}{r|}{8.5941} & 14.4756 \\
\multicolumn{1}{|r|}{\textbf{5000}} & 12500.1551 & \multicolumn{1}{r|}{9.7739} & \multicolumn{1}{r|}{4.8452} & \multicolumn{1}{r|}{1.9338} & \multicolumn{1}{r|}{1.4080} & \multicolumn{1}{r|}{2.9284} & \multicolumn{1}{r|}{8.7940} & 18.3506 \\
\multicolumn{1}{|r|}{\textbf{10000}} & 25000.2015 & \multicolumn{1}{r|}{9.9066} & \multicolumn{1}{r|}{4.8818} & \multicolumn{1}{r|}{1.9672} & \multicolumn{1}{r|}{1.4514} & \multicolumn{1}{r|}{2.9583} & \multicolumn{1}{r|}{8.6791} & 18.6464 \\
\multicolumn{1}{|r|}{\textbf{25000}} & 62501.7251 & \multicolumn{1}{r|}{9.8401} & \multicolumn{1}{r|}{4.8190} & \multicolumn{1}{r|}{1.9618} & \multicolumn{1}{r|}{1.4529} & \multicolumn{1}{r|}{2.9747} & \multicolumn{1}{r|}{8.7149} & 18.5961 \\ \hline
\end{tabular}
\label{tab:basic_simulations}
\end{table*}

As expected, regret tends to increase as $\beta \searrow \lambda_1 = 1$ and $\beta \nearrow \lambda_{1, 2} = 2$. Additionally, we need larger values of $T$ for the regret to stabilize when $\beta \searrow 1$ and $\beta \nearrow 2$. These phenomena are not symmetric over the interval $\beta \in (\lambda_1, \lambda_{1, 2})$, however. Figure \ref{fig:mid_regrets} plots the average regret for $T = 1000$ and $\beta \in \{1.01, 1.02, \ldots, 1.99\}$ across 10000 simulations. Of these choices, $\beta = 1.44$ performs the best with a simulated regret of 1.4001. All $\beta$ between 1.36 and 1.53 achieve regret less than 1.5 for $T = 1000$.

\begin{figure}[!h]
    \centering
    \includegraphics[width = \linewidth]{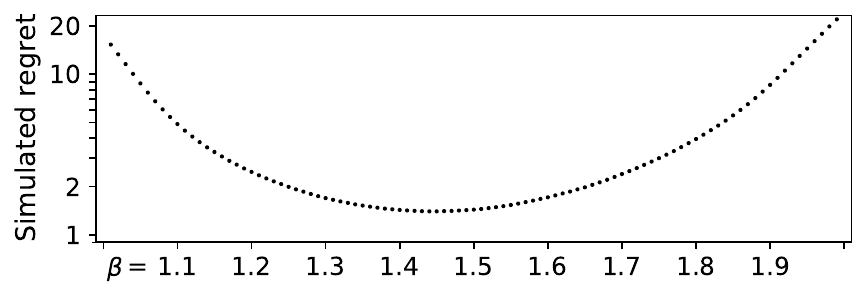}
    \vspace{-18pt}
    \caption{Simulated $\beta$-\LT{} regrets, $T = 1000$ (logarithmic vertical axis)}
    \label{fig:mid_regrets}
\end{figure}


We compare these results to Figure EC.1(e) of \cite{Bumpensanti2020} that summarizes policies' empirical performance for a problem with the same parameters. First, we observe that the \FBS{} results in that figure are competitive with our best $\beta$-\LT{} policies. Second, the \BR{} results in that figure are also competitive with our best $\beta$-\LT{} policies. This is as expected, because our $\beta$-\LT{} policy with $\beta = 1.5$ is essentially the \BR{} policy appropriately scaled to this continuous-time problem. Third, we observe that $\beta$-\LT{} policies with $\beta \approx 1.1$ and $\beta \approx 1.8$ entail regrets that are similar to those produced by \IRT{} for these parameters. All choices of $\beta$ between these values give even lower regrets, with a wide range of $\beta \in [1.25, 1.65]$ entailing regrets that are no more than half of those produced by \IRT{}.

Next, we compare $\beta$-\LT{} policies to the known optimal thresholding policy. Figure \ref{fig:comp_thresholds} illustrates the optimal policy for $t \leq 50$. To avoid computational issues for large $t$, we define an extrapolated optimal (\textsf{EO}) heuristic: we follow the exactly computed optimal policy for $t \leq 100$, and we linearly extrapolate the optimal threshold function for $t > 100$ based on the function's behavior at $t \approx 100$. The slope of the linear extrapolation is approximately 1.4420. Although this slope is close to the best-performing $\beta = 1.44$, this is merely a coincidence resulting from our choices of $p_1$ and $p_2$. For the values of $T$ in Table \ref{tab:basic_simulations}, \textsf{EO} achieves regrets of 1.3549, 1.4079, 1.4001, 1.3950, 1.3652, 1.4069, and 1.4091. These are all at most 0.01 lower than the corresponding regrets achieved by $\beta$-\LT{} with $\beta = 1.44$. 

\begin{figure}[!h]
    \centering
    \includegraphics[width = \linewidth]{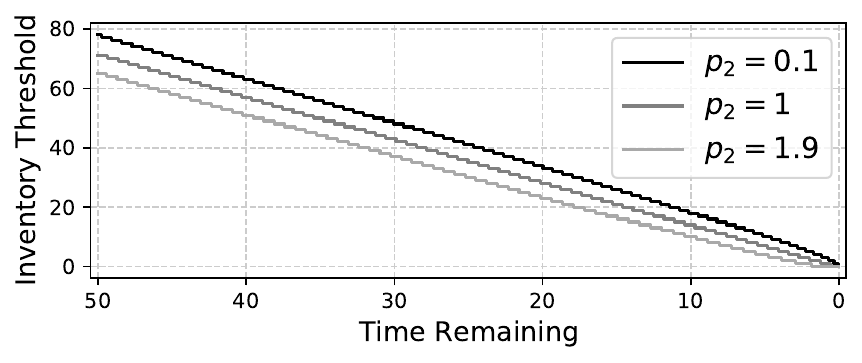}
    \vspace{-18pt}
    \caption{Optimal threshold functions, $p_2 \in \{0.1, 1, 1.9\}$}
    \label{fig:comp_thresholds}
\end{figure}

\subsection{Effects of Initial Inventory}

Our analysis indicates that sample paths are memoryless with respect to their initial conditions. Thus, one would expect that two values of $\alpha$ that are both associated with a high probability of crossing a particular $\beta$-line would entail similar regrets. We next conduct a brief numerical experiment to illustrate this idea. For each $\alpha \in \{1, 1.25, 1.75, 2\}$ and $\beta \in \{1.25, 1.75\}$, we simulate 10000 realizations for $T \in \{100, 1000, 10000\}$. Table \ref{tab:alpha_simulations} displays the simulated regrets.

\begin{table*}[!h]
\small
\centering
\caption{Simulated comparison of initial relative inventory levels}
\begin{tabular}{l|rrrrrrrr|}
\cline{2-9}
 & \multicolumn{8}{c|}{\textbf{Simulated} $\bm{\beta}$\textbf{-\LT{} Regret}} \\ \cline{2-9} 
 & \multicolumn{4}{c|}{$\bm{\beta = 1.25}$} & \multicolumn{4}{c|}{$\bm{\beta = 1.75}$} \\ \cline{2-9} 
 & \multicolumn{1}{r|}{$\bm{\alpha = 1}$} & \multicolumn{1}{r|}{$\bm{\alpha = 1.25}$} & \multicolumn{1}{c|}{$\bm{\alpha = 1.75}$} & \multicolumn{1}{c|}{$\bm{\alpha = 2}$} & \multicolumn{1}{c|}{$\bm{\alpha = 1}$} & \multicolumn{1}{c|}{$\bm{\alpha = 1.25}$} & \multicolumn{1}{c|}{$\bm{\alpha = 1.75}$} & \multicolumn{1}{c|}{$\bm{\alpha = 2}$} \\ \hline
\multicolumn{1}{|r|}{$\bm{T}$ \textbf{= 100}} & \multicolumn{1}{r|}{0.8196} & \multicolumn{1}{r|}{1.9506} & \multicolumn{1}{r|}{1.7675} & \multicolumn{1}{r|}{0.7479} & \multicolumn{1}{r|}{0.8213} & \multicolumn{1}{r|}{2.4067} & \multicolumn{1}{r|}{2.6624} & 1.0741 \\
\multicolumn{1}{|r|}{\textbf{1000}} & \multicolumn{1}{r|}{0.9424} & \multicolumn{1}{r|}{1.9924} & \multicolumn{1}{r|}{1.9921} & \multicolumn{1}{r|}{0.8804} & \multicolumn{1}{r|}{1.1384} & \multicolumn{1}{r|}{2.9997} & \multicolumn{1}{r|}{3.0006} & 1.3637 \\
\multicolumn{1}{|r|}{\textbf{10000}} & \multicolumn{1}{r|}{0.9678} & \multicolumn{1}{r|}{1.9672} & \multicolumn{1}{r|}{1.9672} & \multicolumn{1}{r|}{0.9509} & \multicolumn{1}{r|}{1.3522} & \multicolumn{1}{r|}{2.9583} & \multicolumn{1}{r|}{2.9583} & 1.4354 \\ \hline
\end{tabular}
\label{tab:alpha_simulations}
\end{table*}

As expected, any particular $\beta$-\LT{} policy performs similarly for each $\alpha \in \{1.25, 1.5, 1.75\}$. This effect is even more pronounced for larger values of $T$. When $T = 10000$, each $\alpha \in \{1.25, 1.5, 1.75\}$ results in exactly the same simulated regret of 1.9672 for $\beta = 1.25$. Also, when $T = 10000$, each $\alpha \in \{1.25, 1.5, 1.75\}$ results in exactly the same simulated regret of 1.9672 for $\beta = 1.25$. Each experiment uses the same set of 10000 random streams of customer arrivals. Thus, for each individual trial, these results indicate that the three sample paths beginning at each $\alpha \in \{1.25, 1.5, 1.75\}$ typically coincide at some point and move together for the remainder of the selling horizon (i.e., the sample paths couple) for large values of $T$.

For any $\alpha \in (\lambda_1, \lambda_1 + \lambda_2)$ and any $\beta \in (\lambda_1, \lambda_1 + \lambda_2)$, the probability of crossing the $\beta$-line during the horizon approaches one as $T$ grows. This is not the case when $\alpha = \lambda_1$ or $\alpha = \lambda_1 + \lambda_2$. Recall that a sample path achieves hindsight optimality if it never crosses the $\beta$-line during the horizon. Thus, the simulated regrets for $\alpha \in \{1, 2\}$ are much lower than those for $\alpha \in \{1.25, 1.5, 1.75\}$, even for large values of $T$.

\subsection{Effects of Selling Prices}

The risk associated with accepting an online customer decreases as $p_2/p_1$ grows. Hence, we expect lower ideal values of $\beta$ for higher $p_2/p_1$ and vice versa. Empirical evidence supports this idea. Returning to the original setting of $\alpha = 1.5$, Figure \ref{fig:hilo_regrets} shows the regret across 10000 simulations for $T = 1000$, $p_1 = 2$, and $p_2 \in \{0.1, 1.9\}$. For $p_2 = 0.1$, the lowest regret of 0.7041 is achieved at $\beta = 1.78$. For $p_2 = 1.9$, the lowest regret of 0.5127 is achieved at $\beta = 1.17$. These two $\beta$-\LT{} policies are again competitive with their corresponding \textsf{EO} policies, with each achieving regret no higher than 0.3 above \textsf{EO} for all of the values of $T$ in Table \ref{tab:basic_simulations}. These numerical results for $p_2 \in \{0.1, 1, 1.9\}$ suggest that the expected performance of an appropriately chosen $\beta$-\LT{} policy may be nearly indistinguishable from that of the optimal threshold policy.

\begin{figure}[!h]
    \centering
    \includegraphics[width = \linewidth]{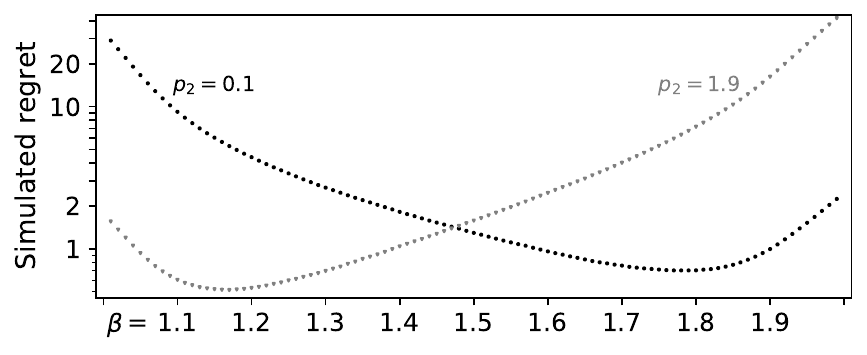}
    \vspace{-16pt}
    \caption{Simulated $\beta$-\LT{} regrets with varying offline selling price, $T = 1000$ (logarithmic vertical axis)}
    \label{fig:hilo_regrets}
\end{figure}

Figure \ref{fig:comp_thresholds} displays the exactly computed threshold functions for $t \leq 50$ and $p_2 \in \{0.1, 1, 1.9\}$. Although the optimal threshold functions' behavior near the end of the horizon is clearly influenced by $p_2$, the three functions' slopes seem to coincide as $t \to \infty$! For these values of $p_2$, the optimal threshold functions' step sizes all appear to converge to approximately 0.69 (corresponding to a slope of approximately 1.44) as $t$ grows. 



Recall the original conjecture of \cite{Hodge2008} in Section \ref{sec:intro}. We conclude this work by conjecturing an even stronger statement: as $t \to \infty$, the step widths of the optimal threshold function $\theta(t)$ converge to a finite positive constant that depends on $\lambda_1$ and $\lambda_2$ but \underline{not} on $p_1$ or $p_2$ (assuming $p_1 > p_2 > 0$).

\section*{Acknowledgements}
The authors acknowledge financial support from The Home Depot. Dipayan Banerjee’s work was also supported by the U.S.\ National Science Foundation (DGE-2039655). The authors thank He Wang for helpful comments. The authors have no further interests to declare.

{\bibliography{_refs}}



\newpage

\appendix

\section{\revision{Omitted Proofs and Technical Details}}
\label{proofs}

\revision{
\subsection{Derivation of Equation \eqref{qij_above_to_below}}
\label{proof:qij}

We are interested in the transition probability $q_{i, j}$ for the case in which $i \geq 0$ and $j \leq 0$. In this case, the sample path is on or above the $\beta$-line during the earlier part of the time interval $[t, t - 1]$, then a customer arrives which moves the sample path below the $\beta$-line, then the sample path is below the $\beta$-line for the remainder of the interval. A total of $i + 1 + |j|$ customers must be accepted during the interval: $i + 1$ customers of any type must arrive while the sample path is above the $\beta$-line, then $|j|$ offline customers must arrive while the sample path is below the $\beta$-line.

Therefore, two independent events must occur in succession. First, exactly $i + 1$ customer arrivals (of any type) must occur during the interval prior to $t - 1$, which brings the sample path just below the $\beta$-line. The time at which the $(i + 1)$-th of these customers arrive is denoted as $v \in (t, t - 1)$. By definition, the probability density of $v$ is Erlang: $h(i + 1, \lambda_{1, 2}, t - v)$. 
Upon crossing the $\beta$-line at time $v$, the second necessary event is the arrival of exactly $|j| = -j$ offline customers during the time interval $(v, t - 1]$. The probability of this event occurring, conditional on the sample path crossing the $\beta$-line at time $v$, is $f\big( (v - (t - 1)) \lambda_1, -j\big)$. Figure \ref{fig:qij_illustration} illustrates these two events.

\begin{figure}[!h]
    \centering
    \vspace{-8pt}
    \includegraphics[width=0.85\linewidth]{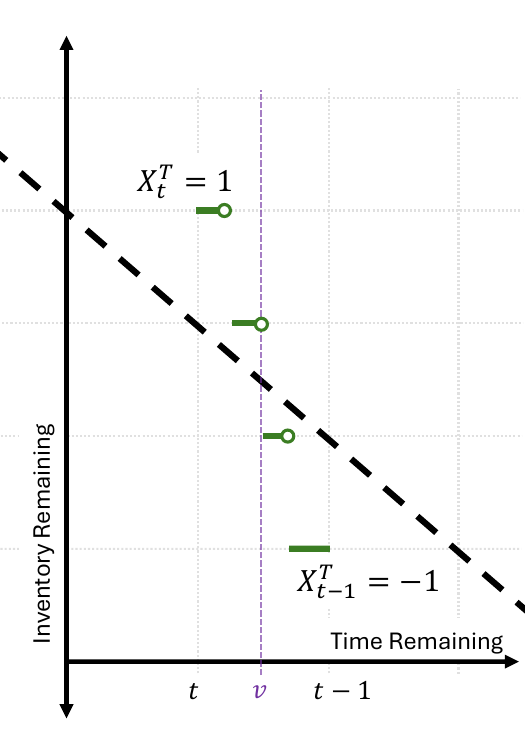} \vspace{-18pt}
    \caption{\revision{Example sample path (in green) relative to $\beta$-line (in black, dashed) for $X^T_t = 1$ and $X^T_{t - 1} = -1$; the crossing time $v$ is marked in purple}}
    \label{fig:qij_illustration}
\end{figure}

The probability density of both of these events occurring, with the crossing of the $\beta$-line taking place at time $v$, is therefore
\begin{equation}
    h(i + 1, \lambda_{1, 2}, t - v) f\big( (v - (t - 1)) \lambda_1, -j\big)
\end{equation}
by the independence of customer arrivals. To calculate $q_{i, j}$, we must then integrate over all possible values of $v$ between $t$ and $t - 1$:
\begin{equation}
    q_{i, j} = \int_{t}^{t - 1} h(i + 1, \lambda_{1, 2}, t - v) f\big( (v - (t - 1)) \lambda_1, -j\big) \dv v .
\end{equation}
The substitution $u = t - v$ produces the expression
\begin{equation*}
    q_{i, j} = \int_0^1 h\big(i + 1, \lambda_{1, 2}, u\big) f\big( (1 - u) \lambda_1, -j\big) \dv u
\end{equation*}
as desired.
}


\revision{
\subsection{$D/M/1$ and $M/D/1$ Transition Matrices}
\label{queue_appendix}

The one-step transition matrix of $\left(Y^T_\sigma\right)$ can be written as
\begin{equation}
\begin{bmatrix}
1 - f(\lambda_{1, 2}, 0) & f(\lambda_{1, 2}, 0) & 0 & 0 & 0 & \cdots \\
1 - \sum_{\ell = 0}^1 f(\lambda_{1, 2}, \ell)& f(\lambda_{1, 2}, 0) & f(\lambda_{1, 2}, 1) & 0 & 0 & \cdots \\
1 - \sum_{\ell = 0}^2 f(\lambda_{1, 2}, \ell) & f(\lambda_{1, 2}, 0) & f(\lambda_{1, 2}, 1) & f(\lambda_{1, 2}, 2) & 0 & \cdots \\
\vdots & \vdots & \vdots & \vdots & \vdots & \ddots
\end{bmatrix} .
\end{equation}
The one-step transition matrix of a $G/M/1$ queue is given in equation (5.54) of \cite{Gross1998} as 
\begin{equation}
\begin{bmatrix}
1 - b_0 & b_0 & 0 & 0 & 0 & \cdots \\
1 - \sum_{\ell = 0}^1 b_\ell& b_0 & b_1 & 0 & 0 & \cdots \\
1 - \sum_{\ell = 0}^2 b_\ell & b_0 & b_1 & b_2 & 0 & \cdots \\
\vdots & \vdots & \vdots & \vdots & \vdots & \ddots
\end{bmatrix} .
\end{equation}
Setting 
\begin{equation*}
b_\ell = \frac{e^{-\lambda_{1, 2}} \lambda_{1, 2}^\ell}{\ell!} ,     
\end{equation*}
the $D/M/1$ special case of equation (5.53) in \cite{Gross1998}, shows the equivalence of the two transition matrices.


Theorem 119 of \cite{Serfozo2009} gives the one-step transition matrix of the departure-time DTMC of an $M/D/1$ queue $\mathcal{Q}$ (with arrival rate $\lambda_1$ and departure rate $\beta = 1$) as 
\begin{equation}
\begin{bmatrix}
f(\lambda_{1}, 0) & f(\lambda_{1}, 1) & f(\lambda_{1}, 2) & \cdots \\
f(\lambda_{1}, 0) & f(\lambda_{1}, 1) & f(\lambda_{1}, 2) & \cdots \\
0   & f(\lambda_{1}, 0) & f(\lambda_{1}, 1) & \cdots \\
0   & 0   & f(\lambda_{1}, 0) & \cdots \\
\vdots & \vdots & \vdots & \ddots
\end{bmatrix} ,
\label{Q_dtmc}
\end{equation}
This DTMC is known to have a stationary limiting distribution and a finite expected stationary value for $\lambda_1 < 1$. The departure-time DTMC of an $M/D/1$ queue $\mathcal{Q}'$ having the additional feature that the first arriving job after each idle period is a partial job whose size is distributed according to $g(\cdot)$ is then
\begin{equation}
\begin{bmatrix}
\int_0^1 g(u) f( u \lambda_1, 0 ) \dv u & \int_0^1 g(u) f( u \lambda_1, 1 ) \dv u & \int_0^1 g(u) f( u \lambda_1, 2 ) \dv u & \cdots \\
f(\lambda_{1}, 0) & f(\lambda_{1}, 1) & f(\lambda_{1}, 2) & \cdots \\
0   & f(\lambda_{1}, 0) & f(\lambda_{1}, 1) & \cdots \\
0   & 0   & f(\lambda_{1}, 0) & \cdots \\
\vdots & \vdots & \vdots & \ddots
\end{bmatrix} .
\label{Q'_dtmc}
\end{equation}
Observe that these are exactly the one-step transition probabilities of $\left(|Z^T_\tau|\right)$. For all $\ell \in \N$, it holds that 
\begin{subequations}
    \begin{align}
        \int_0^1 g(u) f( u \lambda_1, \ell ) \dv u
        & < \int_0^1 g(u) f( \lambda_1, \ell ) \dv u \\
        &  = f( \lambda_1, \ell ) \int_0^1 g(u) \dv u \\
        & = f( \lambda_1, \ell ),
    \end{align}
\end{subequations}
where $f( u \lambda_1, \ell ) < f( \lambda_1, \ell )$ is a consequence of \eqref{partial_dv} discussed later. Thus, there exist $\varepsilon_1, \varepsilon_2, \ldots > 0$ such that we can rewrite \eqref{Q'_dtmc} as 
\begin{equation*}
\begin{bmatrix}
f(\lambda_{1}, 0) + \sum_{\ell = 1}^\infty \varepsilon_\ell & f(\lambda_{1}, 1) - \varepsilon_1 & f(\lambda_{1}, 2) - \varepsilon_2 & \cdots \\
f(\lambda_{1}, 0) & f(\lambda_{1}, 1) & f(\lambda_{1}, 2) & \cdots \\
0   & f(\lambda_{1}, 0) & f(\lambda_{1}, 1) & \cdots \\
0   & 0   & f(\lambda_{1}, 0) & \cdots \\
\vdots & \vdots & \vdots & \ddots
\end{bmatrix} .
\end{equation*}
This is exactly the transition matrix \eqref{Q_dtmc} except that the $0 \to \ell$ transition probabilities have been reduced for all $\ell \in \N$, with a corresponding increase in the $0 \to 0$ transition probability. Then, if $\bm{\psi}$ is the stationary distribution associated with \eqref{Q_dtmc} and $\bm{\omega}$ is the stationary distribution associated with \eqref{Q'_dtmc}, it follows that $\sum_{i = 1}^\infty i \omega_i \leq \sum_{i = 1}^\infty i \psi_i < \infty$.

}

\subsection{Proof of Proposition \ref{MC_bound}}
\label{proof:MC_bound}

\begin{proof}
    For any $T \in \N$, we have $\E\left[|X^T_0| \,\middle\vert\, X^T_T = 0 \right] = \E\left[|X^0_{-T}| \,\middle\vert\, X^0_0 = 0 \right]$ by definition. Hence, it suffices to show $\sup_{t \in \Z \setminus \N} \big\{\E\left[|X^0_t| \,\middle\vert\, X^0_0 = 0 \right]\big\} < \infty$.

    Let $\bm{\pi}$ denote the stationary distribution, which we know to satisfy $\sum_{i \in \Z} |i| \pi_i < \infty$. Then, for any $t \in \Z \setminus \N$,
    \begin{subequations}
    \begin{align}
     \E\left[|X^0_t| \,\middle\vert\, X^0_0 \sim \bm{\pi} \right] 
    & = \sum_{i \in \Z} |i| \Prob(X^0_t = i \mid X^0_0 \sim \bm{\pi})  \\ 
    & = \sum_{i \in \Z} |i| \pi_i.
    \end{align}
    \end{subequations}
    Additionally, for any $t \in \Z \setminus \N$,
    \begin{subequations}
        \begin{align}
            \E\left[|X^0_t| \,\middle\vert\, X^0_0 \sim \bm{\pi} \right]
        & = \sum_{i \in \Z} \E\left[|X^0_t| \,\middle\vert\, X^0_0 = i \right] \Prob(X^0_0 = i) \\        
        & = \sum_{i \in \Z} \E\left[|X^0_t| \,\middle\vert\, X^0_0 = i \right] \pi_i \\
        & \revision{= \E\left[|X^0_t| \,\middle\vert\, X^0_0 = 0 \right] \pi_0 \ +} \\
        & \revision{\qquad \qquad 
        \sum_{i \in \Z \setminus \{0\}} \E\left[|X^0_t| \,\middle\vert\, X^0_0 = i \right] \pi_i} \nonumber \\
        & \geq \E\left[|X^0_t| \,\middle\vert\, X^0_0 = 0 \right] \pi_0, \
        \end{align}
    \end{subequations}
    \revision{where the final step is implied by the fact that
    \begin{equation}
        \sum_{i \in \Z \setminus \{0\}} \E\left[|X^0_t| \,\middle\vert\, X^0_0 = i \right] \pi_i \geq 0.
    \end{equation}}
    
    \noindent Therefore, $\E\left[|X^0_t| \,\middle\vert\, X^0_0 = 0 \right] \leq \frac{1}{\pi_0}\sum_{i \in \Z} |i| \pi_i < \infty$. 
\end{proof}

\subsection{Proof of Lemma \ref{unif_floor_bound}}
\label{proof:unif_floor_bound}

\begin{proof}
    Without loss of generality, assume that $\beta = 1$ and all that other parameters have been scaled appropriately. Let $s > 0$, and assume that $s \notin \N$ to avoid the case from Corollary \ref{on_opt}. 

    \revision{
    To begin, we rewrite the expectation $\Delta_\beta \big(\lfloor s\rfloor, s \big)$ using the tower property, conditioning on the number of offline arrivals during the time interval $\big(s, \lfloor s \rfloor\big)$. 
    The expression $f\big( (s - \lfloor s \rfloor) \lambda_1, k\big)$ represents the probability of $k$ offline arrivals in the time interval $\big(s, \lfloor s \rfloor\big)$.
    The expression $\Delta_\beta \big(\lfloor s\rfloor - k, \lfloor s\rfloor \big)$ represents the expected absolute final inventory position given that $k$ offline customers arrived in the time interval $\big(s, \lfloor s \rfloor\big)$. Then, we have
    \begin{subequations}
    \begin{align}
        \Delta_\beta \big(\lfloor s\rfloor, s \big)
        & = \sum_{k = 0}^{\infty} f\big( (s - \lfloor s \rfloor) \lambda_1, k\big) 
        \Delta_\beta \big(\lfloor s\rfloor - k, \lfloor s\rfloor \big) \label{floor_lemma_step0}\\
        & = f\big( (s - \lfloor s \rfloor) \lambda_1, 0\big) \Delta_\beta(\lfloor s\rfloor, \lfloor s\rfloor ) \ + \label{floor_lemma_step1} \\   
        & \qquad \sum_{k = 1}^{\infty} f\big( (s - \lfloor s \rfloor) \lambda_1, k\big) 
        \Delta_\beta \big(\lfloor s\rfloor - k, \lfloor s\rfloor \big) . \nonumber 
    \end{align}
    \end{subequations}
    
    Next, for any $k \in \N$ and $x \in (0, 1)$,
    \begin{equation}
        \frac{\partial}{\partial x} f(x, k) =
        \frac{\partial}{\partial x} \left( \frac{x^k e^{-x}}{k!} \right)
        = \frac{x^{k - 1} e^{-x}}{k!} (k - x) > 0 .
        \label{partial_dv}
    \end{equation}
    Because $\lambda_1 < \beta = 1$ and $s - \lfloor s \rfloor < 1$, we therefore have 
    \begin{equation}
        f\big( (s - \lfloor s \rfloor) \lambda_1, k\big) \leq f(\lambda_1, k) = f\big( (\lceil s \rceil - \lfloor s \rfloor) \lambda_1, k\big)
    \end{equation}
    for all $k \in \N$. Applying this inequality to \eqref{floor_lemma_step1} gives
    \begin{align}
        \Delta_\beta \big(\lfloor s\rfloor, s \big)
        & \leq f\big( (s - \lfloor s \rfloor) \lambda_1, 0\big) \Delta_\beta(\lfloor s\rfloor, \lfloor s\rfloor ) \ + \\   
        & \qquad \sum_{k = 1}^{\infty} f\big( (\lceil s \rceil - \lfloor s \rfloor) \lambda_1, k\big) \Delta_\beta(\lfloor s\rfloor - k, \lfloor s\rfloor ) .\nonumber 
    \end{align}
    This further implies
    \begin{align}
        \Delta_\beta \big(\lfloor s\rfloor, s \big)
        & \leq \Delta_\beta(\lfloor s\rfloor, \lfloor s\rfloor ) \ +  \label{floor_lemma_step2}\\   
        & \qquad \sum_{k = 0}^{\infty} f\big( (\lceil s \rceil - \lfloor s \rfloor) \lambda_1, k\big) \Delta_\beta(\lfloor s\rfloor - k, \lfloor s\rfloor )   \nonumber 
    \end{align}
    because $f\big( (s - \lfloor s \rfloor) \lambda_1, 0\big) \leq 1$ and $f\big( (\lceil s \rceil - \lfloor s \rfloor) \lambda_1, 0\big) \Delta_\beta(\lfloor s\rfloor - 0, \lfloor s\rfloor ) \geq 0$. 

    Finally, the same conditional expectation argument of \eqref{floor_lemma_step0} gives 
    \begin{equation}
        \Delta_\beta(\lfloor s \rfloor, \lceil s \rceil ) = \sum_{k = 0}^{\infty} f\big( (\lceil s \rceil - \lfloor s \rfloor) \lambda_1, k\big) \Delta_\beta(\lfloor s\rfloor - k, \lfloor s\rfloor ) \label{floor_lemma_step3}
    \end{equation}
    as an expression for the expected absolute final inventory position with $\lfloor s \rfloor$ inventory remaining and $\lceil s \rceil$ time remaining. Combining \eqref{floor_lemma_step2} and \eqref{floor_lemma_step3} gives
    \begin{equation}
        \Delta_\beta \big(\lfloor s\rfloor, s \big) \leq \Delta_\beta(\lfloor s\rfloor, \lfloor s\rfloor) + \Delta_\beta(\lfloor s \rfloor, \lceil s \rceil ).
    \end{equation}}
    By Corollary \ref{on_opt} and Lemma \ref{unif_k_bound}, $\hat{\eta}_\beta = \eta_{\beta, 0} + \eta_{\beta, 1} < \infty$.
    \end{proof}

    

\subsection{Proof of Corollary \ref{high_alpha_corollary}}
\label{proof:high_alpha_corollary}

\begin{proof}
    It remains to be shown that the probability of crossing the $\beta$-line at least once during the horizon approaches zero as $n \to \infty$ and $T = n / \alpha \to \infty$; denote this probability $\xi_\beta(n, n/\alpha)$. Because $\lambda_{1, 2} > \beta$, $\xi_\beta(n, n/\alpha)$ is bounded above by the probability of the sample path crossing the $\lambda_{1, 2}$-line (defined analogously to the $\beta$-line) at least once during the horizon.
    
    For all $n \in \N_0$, let $n \delta = n\big( 1 - \lambda_{1, 2}/\alpha \big) = n - \lambda_{1, 2}n/\alpha > 0$ denote the expected final inventory position if all customers were to be accepted (allowing for negative inventory positions as usual). For all $t \in [0, T]$, observe that $\lambda_{1, 2} t = n - \E\big[ N_{1, n/\alpha}(t) + N_{2, n/\alpha}(t) \big] - n\delta$ for all $t \in [0, T]$. Without loss of generality, assume that $\alpha = 1$ (so that $n = T$) and all other parameters have been scaled appropriately. Then, for any $n \in \N$,
    \begin{subequations}
    \begin{align}
        & \xi_{\beta}(n, n/\alpha) \leq \Prob\left( \exists t \in [0, n] : n - \big(N_{1, n/\alpha}(t) + N_{2, n/\alpha}(t) \big) < \lambda_{1, 2} t \right) \notag \\
        & \ = \Prob\big( \exists t \in \{0, 1, \ldots, n - 1\} : n - \big(N_{1, n/\alpha}(t) + N_{2, n/\alpha}(t) \big) \leq \lambda_{1, 2} t \big) \notag \\
        & \ = \Prob\big( \exists t \in \{0, 1, \ldots, n - 1\} : N_{1, n/\alpha}(t) + N_{2, n/\alpha}(t) \geq n - \lambda_{1, 2} t \big) \notag \\
        & \ = \Prob\big( \exists t \in \{0, 1, \ldots, n - 1\} : N_{1, n/\alpha}(t) + \notag \\
        & \qquad \qquad N_{2, n/\alpha}(t) \geq \E\big[ N_{1, n/\alpha}(t) + N_{2, n/\alpha}(t) \big] + n\delta \big) \notag \\
        & \ \leq \sum_{t = 0}^{n - 1} \Prob\left( N_{1, n/\alpha}(t) + N_{2, n/\alpha}(t) \geq \E\big[ N_{1, n/\alpha}(t) + N_{2, n/\alpha}(t) \big] + n\delta \right) \notag \\
        & \ \leq \sum_{t = 0}^{n - 1} \exp \left( \frac{-\delta^2 n^2}{2 \left( \E\big[ N_{1, n/\alpha}(t) + N_{2, n/\alpha}(t) \big] + n\delta\right)}\right) 
        \label{eq:PoisTail}\\ 
        & \ \leq n \exp \left( -\delta^2 n^2 / \left[ 2 \left( \lambda_{1, 2}n + n\delta\right) \right] \right)\\
        & \ \leq \frac{n}{\exp\big( \delta^2 n/(2\lambda_1 + 2\lambda_2 + \revision{2\delta}) \big)},\label{eq:last_step}
    \end{align}
    \end{subequations}
where \eqref{eq:PoisTail} uses the Poisson upper tail bound in Theorem A.8 of \cite{Canonne2022}, and  \eqref{eq:last_step} uses the fact that $\E\big[ N_{1, n/\alpha}(t) + N_{2, n/\alpha}(t)\big] \leq \lambda_{1, 2}n$ for all $t \in [0, n]$. The final expression goes to zero as $n \to \infty$, giving the desired result and an upper bound on the rate of convergence.    
\end{proof}

\vfill

\end{document}